\DeclareMathOperator{\Pro}{Pr}
\DeclareMathOperator{\BPr}{\mathbf{Pr}}
\newtheorem{theorem}{Theorem}[section]
\newtheorem{definition}[theorem]{Definition}
\newtheorem{lemma}[theorem]{Lemma}
\newtheorem{proposition}[theorem]{Proposition}
\newtheorem{example}[theorem]{Example}
\newtheorem{corollary}[theorem]{Corollary}
\title{Semimodules over commutative semirings and modules over unitary commutative rings}
\author{Ivan~Chajda and Helmut~L\"anger}
\date{}
\begin{document}
\footnotetext[1]{Support of the research by \"OAD, project CZ~02/2019, and support of the research of the first author by IGA, project P\v rF~2019~015, is gratefully acknowledged.}
\maketitle
\begin{abstract}
We study the so-called closed and splitting subsemimodules and submodules of a given semimodule or module, respectively. We describe lattices of subsemimodules and of closed subsemimodules and posets of splitting subsemimodules and submodules. In the case of modules a natural bijective correspondence between these posets and posets of projections is established.
\end{abstract}
 
{\bf AMS Subject Classification:} 06C15, 13C13, 16Y60

{\bf Keywords:} Semiring, semimodule, subsemimodule, closed subsemimodule, splitting subsemimodule, module, submodule, projection, bounded poset, orthomodular poset

\section{Introduction}

It is well-known that any physical theory determines a class of event-state systems. To avoid details, in the case of quantum mechanics this event-state system is considered within the framework of a Hilbert space $\mathbf H$ whose projection operators are identified with the closed subspaces of $\mathbf H$.

It was recognized in 1936 by G.~Birkhoff and J.~von~Neumann (\cite{BV}) and 1937 by K.~Husimi (\cite H), see also \cite M or \cite Z, that if the Hilbert space $\mathbf H$ is of infinite dimension then the lattice of its closed subspaces need not be modular contrary to the case of the lattice of all subspaces. However, a later inspection showed that also a supremum need not exist provided the subspaces are orthogonal. This was the reason why so-called orthomodular posets were introduced (see e.g.\ \cite{Be}) and intensively studied during the last decades.

The natural question arises if the property that the closed subspaces of $\mathbf H$ form an orthomodular lattice or an orthomodular poset is a privilege of a Hilbert space. It was already shown by the authors \cite{CL20} that this is not the case since the so-called splitting subspaces form orthomodular posets also for vector spaces which are not Hilbert spaces.

Since the tools for determining the orthomodular poset of splitting subspaces of a given vector space can be used also for modules and, more generally, for semimodules as shown in \cite E and \cite{Tan}, we decided to extend our study for closed subsemimodules and submodules. We define splitting subsemimodules and prove that for a given semimodule $\mathbf M$, the set of its splitting subsemimodules forms a bounded poset with an antitone involution which, in the case when $\mathbf M$ is a module, turns out to be even an orthomodular poset. Similarly as for a Hilbert space, we use the method of projections and the bijective correspondence between the poset of projections and the poset of splitting submodules.

The used concepts from posets (i.e.\ ordered sets) and lattices are taken from monographs \cite{Be} and \cite{Bi}. We hope that the study of closed and splitting subsemimodules and submodules and their lattices and posets can illuminate some properties of these concepts also in vector spaces, in particular in Hilbert spaces. Moreover, it may show that some physical theories need not be developed by using Hilbert spaces, but can be considered in a more general setting.

\section{Semimodules over semirings}

There are various definitions of a semiring in literature. For our reasons, we use that taken from the monograph \cite G.

Recall that a {\em commutative semiring} is an algebra $(S,\oplus,\cdot,0,1)$ of type $(2,2,0,$ $0)$ satisfying the following conditions:
\begin{itemize}
\item $(S,\oplus,0)$ and $(S,\cdot,1)$ are commutative monoids,
\item $(x\oplus y)z\approx xz\oplus yz$,
\item $x0\approx0$.
\end{itemize}

Of course, every unitary commutative ring and every bounded distributive lattice is a commutative semiring.

Semimodules and semirings were studied by several authors, let us mention at least the papers \cite E, \cite{SW}, \cite{Tak} and \cite{Tan}. Since these concepts are defined differently by the different authors, for the reader's convenience we provide the following definition.

\begin{definition}
A {\em semimodule} over a commutative semiring $(S,\oplus,\cdot,0,1)$ is an ordered quadruple $(M,+,\cdot,\vec0)$ such that $\cdot$ is a mapping from $S\times M$ to $M$ and the following conditions are satisfied for $\vec x,\vec y\in M$ and $a,b\in S$:
\begin{itemize}
\item $(M,+,\vec0)$ is a commutative monoid,
\item $a(\vec x+\vec y)\approx a\vec x+a\vec y$,
\item $(a\oplus b)\vec x\approx a\vec x+b\vec x$,
\item $(ab)\vec x\approx a(b\vec x)$,
\item $1\vec x\approx\vec x$,
\item $0\vec x\approx a\vec0=\vec0$.
\end{itemize}
\end{definition}

Recall that a subset $U$ of a semimodule $\mathbf M=(M,+,\cdot,\vec0)$ over a commutative semiring $(S,\oplus,\cdot,0,1)$ {\rm(}or the corresponding ordered quadruple $(U,+,\cdot,\vec0)${\rm)} is called a {\em subsemimodule} of $\mathbf M$ if $\vec x+\vec y,a\vec x\in U$ for all $\vec x,\vec y\in U$ and $a\in S$. Let $L(\mathbf M)$ denote the set of all subsemimodules of $\mathbf M$.

Contrary to the case of vector spaces, not every semimodule may have a basis. We define the notion of a basis for semimodules as follows.

\begin{definition}
Let $\mathbf M=(M,+,\cdot,\vec0)$ be a semimodule over a commutative semiring $(S,\oplus,\cdot,0,1)$ and $I$ a non-empty set, put
\[
A:=\{f\in S^I\mid f(i)=0\text{ except a finite number of }i\in I\}
\]
and let $\vec b_i\in M$ for all $i\in I$. Then $B:=\{\vec b_i\mid i\in I\}$ is called a {\em basis} of $\mathbf M$ if for every $\vec x\in M$ there exists exactly one $f\in A$ with
\[
\sum_{i\in I}f(i)\vec b_i=\vec x.
\]
\end{definition}

In the following we will assume that $\mathbf M$ has a basis $B$. Then $\mathbf M$ is isomorphic to the subsemimodule $(A,+,\cdot,\vec0)$ of $(S,\oplus,\cdot,0)^I$. Hence we may identify $\mathbf M$ with this subsemimodule. In the sequel we denote the coordinates of the element $\vec x$ of $M$ with respect to the basis $B=\{\vec b_i\mid i\in I\}$ by $x_i,i\in I$.

An example of a semimodule having a basis is the following.

If, for instance, $(S,\oplus,\cdot,0,1)$ is an arbitrary commutative semiring and $I=\mathbb N$ then the subsemimodule $(A,+,\cdot,\vec0)$ of $(S,\oplus,\cdot,0)^\mathbb N$ has the basis
\[
\{(1,0,0,\ldots),(0,1,0,0,\ldots),(0,0,1,0,0,\ldots),\ldots\}.
\]
The situation is analogous for an arbitrary non-empty set $I$.

The concept of an inner product on semimodules was investigated in \cite{Tan}. For the reader's convenience we recall the definition of the inner product as well as the concept of orthogonality for subsemimodules.

\begin{definition}
On $M$ we define an {\em inner product} as follows: If $\vec x,\vec y\in M$ then
\[
\vec x\vec y:=\sum\limits_{i\in I}x_iy_i.
\]
We write $\vec x\perp\vec y$ id $\vec x\vec y=0$. Moreover, for $C\subseteq M$ we put
\[
C^\perp:=\{\vec x\in M\mid \vec x\perp\vec y\text{ for all }\vec y\in C\}.
\]
\end{definition}

\begin{lemma}\label{lem2}
Let $\vec a,\vec b\in M$. Then {\rm(i)} and {\rm(ii)} hold:
\begin{enumerate}
\item[{\rm(i)}] If $\vec a\vec x=\vec b\vec x$ for all $\vec x\in M$ then $\vec a=\vec b$,
\item[{\rm(ii)}] if $\vec a\perp\vec x$ for all $\vec x\in M$ then $\vec a=\vec0$,
\end{enumerate}
\end{lemma}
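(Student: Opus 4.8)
The plan is to prove (i) directly by testing the hypothesis against the basis vectors, and then to deduce (ii) as the special case $\vec b=\vec0$. The key observation is that, under the identification of $\mathbf M$ with the subsemimodule $(A,+,\cdot,\vec0)$ of $(S,\oplus,\cdot,0)^I$, each basis vector $\vec b_j$ has coordinates $(\vec b_j)_i=1$ for $i=j$ and $(\vec b_j)_i=0$ otherwise; indeed, by the uniqueness clause in the definition of a basis, the only $f\in A$ with $\sum_{i\in I}f(i)\vec b_i=\vec b_j$ is the one supported on $\{j\}$ with value $1$ there.

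For (i), fix $j\in I$ and substitute $\vec x=\vec b_j$ into the hypothesis $\vec a\vec x=\vec b\vec x$. By the definition of the inner product,
\[
\vec a\vec b_j=\sum_{i\in I}a_i(\vec b_j)_i=a_j,
\]
the sum being finite since $\vec a\in M=A$ has finite support (and in any case only the summand with $i=j$ can be nonzero), and using that $1$ is the unit of $(S,\cdot,1)$ together with the semiring identity $x0\approx0$. Likewise $\vec b\vec b_j=b_j$. Hence $a_j=b_j$ for every $j\in I$, which is to say $\vec a=\vec b$.

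For (ii), observe that $\vec0\,\vec x=\sum_{i\in I}0\cdot x_i=0$ for every $\vec x\in M$, so the assumption that $\vec a\perp\vec x$ for all $\vec x\in M$ says precisely that $\vec a\vec x=0=\vec0\,\vec x$ for all $\vec x\in M$; applying part (i) with $\vec b=\vec0$ then gives $\vec a=\vec0$.

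I do not expect a genuine obstacle here: the whole argument is a one-line evaluation at basis vectors. The only points needing a little care are the bookkeeping with the identification $\mathbf M\cong(A,+,\cdot,\vec0)$ — in particular that each $\vec b_j$ is an element of $M$, so it is a legitimate choice for $\vec x$ — and checking that $\vec a\vec b_j$ is a well-defined (finite) sum, which follows from $\vec a$ having finite support.
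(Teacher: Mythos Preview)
Your proof is correct and follows exactly the paper's approach: evaluate the hypothesis at each basis vector $\vec b_j$ to obtain $a_j=\vec a\vec b_j=\vec b\vec b_j=b_j$, and then note that (ii) is the special case $\vec b=\vec0$ of (i). You merely supply more of the bookkeeping (coordinates of $\vec b_j$, finiteness of the sum) that the paper leaves implicit.
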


\begin{proof}
We have $a_i=\vec a\vec b_i=\vec b\vec b_i=b_i$ for all $i\in I$. Assertion (ii) is a special case of (i).
\end{proof}

The following results are well-known and easy to check.

\begin{proposition}\label{prop1}
If $U,W\in L(\mathbf M)$ then
\begin{itemize}
\item $U^\perp\in L(\mathbf M)$,
\item $U\subseteq W$ implies $W^\perp\subseteq U^\perp$,
\item $U\subseteq U^{\perp\perp}$,
\item $U^{\perp\perp\perp}=U^\perp$,
\item $U\subseteq W^\perp$ if and only if $W\subseteq U^\perp$,
\item $\{\vec0\}^\perp=M$ and $M^\perp=\{\vec0\}$.
\end{itemize}
{\rm(}The last assertion follows from Lemma~\ref{lem2}.{\rm)} Thus $^{\perp\perp}$ is a closure operator on $(L(\mathbf M),\subseteq)$.
\end{proposition}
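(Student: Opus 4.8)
The plan is to recognise $^\perp$ as one side of the Galois connection (polarity) induced by the symmetric relation $\perp$ on $M$, and to deduce everything from two elementary facts about the inner product: it is \emph{symmetric}, i.e.\ $\vec x\vec y=\vec y\vec x$ (immediate from $\vec x\vec y=\sum_{i\in I}x_iy_i$ and commutativity of $\cdot$ in $S$), and it is additive and $S$-homogeneous in each argument (a consequence of the distributive and absorption laws of $S$, together with the fact that every element of the ambient semimodule has only finitely many nonzero coordinates, so the displayed sums are genuinely finite).

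First I would check $U^\perp\in L(\mathbf M)$. Given $\vec x,\vec y\in U^\perp$ and $a\in S$, bilinearity gives $(\vec x+\vec y)\vec z=0$ and $(a\vec x)\vec z=0$ for every $\vec z\in U$, so $\vec x+\vec y,a\vec x\in U^\perp$; this is the only point where the semimodule structure is really used. Antitonicity ($U\subseteq W$ implies $W^\perp\subseteq U^\perp$) is immediate: if $\vec x\perp\vec y$ for all $\vec y\in W$ then in particular for all $\vec y\in U$. For the adjunction-type statement, note that $U\subseteq W^\perp$ means precisely that $\vec x\vec y=0$ for every $\vec x\in U$ and every $\vec y\in W$; by symmetry of the inner product this condition is unchanged when $U$ and $W$ are interchanged, hence it is also equivalent to $W\subseteq U^\perp$.

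The remaining items then follow formally. Taking $W:=U^\perp$ in the adjunction statement turns the trivial inclusion $U^\perp\subseteq U^\perp$ into $U\subseteq U^{\perp\perp}$. Applying this with $U^\perp$ in place of $U$ gives $U^\perp\subseteq U^{\perp\perp\perp}$, while applying antitonicity to $U\subseteq U^{\perp\perp}$ gives $U^{\perp\perp\perp}\subseteq U^\perp$; hence $U^{\perp\perp\perp}=U^\perp$. For the extremal cases, $\{\vec0\}^\perp=M$ since $\vec x\vec0=\sum_{i\in I}x_i\cdot0=0$ for every $\vec x\in M$, and $M^\perp=\{\vec0\}$ since $\vec a\perp\vec x$ for all $\vec x\in M$ forces $\vec a=\vec0$ by Lemma~\ref{lem2}(ii). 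Finally, $^{\perp\perp}$ is a closure operator on $(L(\mathbf M),\subseteq)$: it is extensive by $U\subseteq U^{\perp\perp}$, monotone because antitonicity applied twice yields $U\subseteq W\Rightarrow U^{\perp\perp}\subseteq W^{\perp\perp}$, and idempotent because $U^{\perp\perp\perp}=U^\perp$ gives $(U^{\perp\perp})^{\perp\perp}=(U^{\perp\perp\perp})^\perp=(U^\perp)^\perp=U^{\perp\perp}$.

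I do not expect a genuine obstacle here, since this is exactly the standard polarity/Galois-connection argument and the authors themselves call it "well-known and easy to check". The only step needing a little attention is the verification of the symmetry and bilinearity of the inner product from the semiring axioms, in particular the tacit check that $\sum_{i\in I}x_iy_i$ is well defined because the sequence $(x_iy_i)_{i\in I}$ has finite support.
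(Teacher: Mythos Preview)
Your proposal is correct and follows exactly the standard polarity/Galois-connection argument that the authors have in mind; in fact the paper does not write out any proof at all, simply declaring the proposition ``well-known and easy to check'' and noting that the last item uses Lemma~\ref{lem2}. Your write-up is therefore strictly more detailed than the paper's treatment, but the underlying reasoning is the same.
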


\begin{definition}\label{def1}
A subsemimodule $U$ of $\mathbf M$ is called {\em closed} if $U^{\perp\perp}=U$. Let $L_c(\mathbf M)$ denote the set of all closed subsemimodules of $\mathbf M$. Obviously, $L_c(\mathbf M)=\{U^\perp\mid U\in L(\mathbf M)\}$.
\end{definition}

Let $U,W,U_j\in L(\mathbf M)$ for all $j\in J$. Put
\begin{align*}
                   U+W & :=\{\vec x+\vec y\mid\vec x\in U,\vec y\in W\}, \\
      \sum_{j\in J}U_j & :=\{\text{sums of finitely many elements of }\bigcup_{j\in J}U_j\}, \\
               U\vee W & :=(U+W)^{\perp\perp}, \\
   \bigvee_{j\in J}U_j & :=(\sum_{j\in J}U_j)^{\perp\perp}, \\
  \mathbf L(\mathbf M) & :=(L(\mathbf M),+,\cap,{}^\perp,\{\vec0\},M), \\
\mathbf L_c(\mathbf M) & :=(L_c(\mathbf M),\vee,\cap,{}^\perp,\{\vec0\},M).
\end{align*}

We can describe the properties of the just defined concepts as follows.

\begin{lemma}\label{lem3}
\
\begin{enumerate}
\item[{\rm(i)}] If $U_j\in L(\mathbf M)$ for all $j\in J$ then
\begin{align*}
   (\sum\limits_{j\in J}U_j)^\perp & =\bigcap\limits_{j\in J}U_j^\perp, \\
(\bigcap\limits_{j\in J}U_j)^\perp & \supseteq\sum\limits_{j\in J}U_j^\perp.
\end{align*}
\item[{\rm(ii)}] If $U_j\in L_c(\mathbf M)$ for all $j\in J$ then
\begin{align*}
(\bigvee_{j\in J}U_j)^\perp & =\bigcap_{j\in J}U_j^\perp, \\
(\bigcap_{j\in J}U_j)^\perp & =\bigvee_{j\in J}U_j^\perp.
\end{align*}
\end{enumerate}
\end{lemma}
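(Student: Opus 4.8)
The plan is to prove the four identities by systematically exploiting Proposition~\ref{prop1}, whose properties make $^{\perp\perp}$ a closure operator, together with the first identity of part~(i), which is the only genuinely new computation required.

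First I would establish the identity $(\sum_{j\in J}U_j)^\perp=\bigcap_{j\in J}U_j^\perp$ in part~(i) directly from the definition of $^\perp$. A vector $\vec x$ lies in $(\sum_{j\in J}U_j)^\perp$ if and only if $\vec x\perp\vec y$ for every finite sum $\vec y$ of elements drawn from $\bigcup_{j\in J}U_j$; since the inner product $\vec x\vec y=\sum_{i\in I}x_iy_i$ is additive in $\vec y$ (and every element of $\bigcup_j U_j$ is itself such a finite sum with a single summand), this is equivalent to $\vec x\perp\vec z$ for every $\vec z\in U_j$ and every $j\in J$, i.e.\ to $\vec x\in\bigcap_{j\in J}U_j^\perp$. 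The only subtlety is the additivity of $\vec y\mapsto\vec x\vec y$: because $\vec x$ and each $\vec y$ have only finitely many nonzero coordinates, the sum $\sum_{i\in I}x_iy_i$ is really a finite sum, and distributivity in the semiring $S$ gives $\vec x(\vec y+\vec y')=\vec x\vec y+\vec x\vec y'$. The inclusion $(\bigcap_{j\in J}U_j)^\perp\supseteq\sum_{j\in J}U_j^\perp$ then follows from monotonicity: $\bigcap_{j\in J}U_j\subseteq U_k$ for each $k$, so $U_k^\perp\subseteq(\bigcap_{j\in J}U_j)^\perp$ by the second bullet of Proposition~\ref{prop1}; since $(\bigcap_{j\in J}U_j)^\perp$ is a subsemimodule containing each $U_k^\perp$, it contains every finite sum of their elements, hence contains $\sum_{j\in J}U_j^\perp$.

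For part~(ii), suppose each $U_j$ is closed. For the first identity, note that $\sum_{j\in J}U_j\subseteq\bigvee_{j\in J}U_j=(\sum_{j\in J}U_j)^{\perp\perp}$, and applying $^\perp$ (antitone) gives $(\bigvee_{j\in J}U_j)^\perp\subseteq(\sum_{j\in J}U_j)^\perp$; conversely $(\sum_{j\in J}U_j)^\perp=(\sum_{j\in J}U_j)^{\perp\perp\perp}=(\bigvee_{j\in J}U_j)^\perp$ using the fourth bullet of Proposition~\ref{prop1}. Combined with part~(i) this yields $(\bigvee_{j\in J}U_j)^\perp=(\sum_{j\in J}U_j)^\perp=\bigcap_{j\in J}U_j^\perp$. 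For the second identity, apply the first identity to the closed subsemimodules $U_j^\perp$ in place of $U_j$ (note $U_j^\perp$ is closed since $U_j^{\perp\perp\perp}=U_j^\perp$): this gives $(\bigvee_{j\in J}U_j^\perp)^\perp=\bigcap_{j\in J}U_j^{\perp\perp}=\bigcap_{j\in J}U_j$, where the last equality uses that each $U_j$ is closed. Taking $^\perp$ of both sides and using that $\bigvee_{j\in J}U_j^\perp$ is closed (it is of the form $(\cdots)^{\perp\perp}$) gives $\bigvee_{j\in J}U_j^\perp=(\bigcap_{j\in J}U_j)^\perp$, as desired.

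I expect no serious obstacle here; the computation of $(\sum_j U_j)^\perp=\bigcap_j U_j^\perp$ in part~(i) is the crux, and the only point demanding care is the bilinearity (really the additivity in each argument) of the inner product over a semiring, which holds precisely because all elements involved have finite support so the relevant sums are finite and semiring distributivity applies. Everything in part~(ii) is then formal manipulation with the closure operator $^{\perp\perp}$ and the antitone map $^\perp$ from Proposition~\ref{prop1}.
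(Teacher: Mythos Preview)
Your proof is correct and follows essentially the same approach as the paper's, which is very terse: the paper simply says that the first assertion in (i) is ``clear,'' the second follows from Proposition~\ref{prop1}, and (ii) follows because $^\perp$ is an antitone involution on $(L_c(\mathbf M),\subseteq)$. Your write-up supplies exactly the details the paper omits---the additivity argument for the first identity in (i) and the explicit reduction of (ii) to (i) via $^{\perp\perp\perp}={}^\perp$---without introducing any genuinely different idea.
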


\begin{proof}
\
\begin{enumerate}
\item[(i)] The first assertion is clear and the second easily follows by applying Proposition~\ref{prop1}.
\item[(ii)] This follows from the fact that by Proposition~\ref{prop1}, $^\perp$ is an antitone involution of $(L_c(\mathbf M),\subseteq)$.
\end{enumerate}
\end{proof}

Using Lemma~\ref{lem3} we obtain immediately

\begin{theorem}\label{th1}
We have that $\mathbf L(\mathbf M)$ is a complete lattice with an antitone unary operation $^\perp$ and $\mathbf L_c(\mathbf M)$ a complete lattice with an antitone involution $^\perp$.
\end{theorem}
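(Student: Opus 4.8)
The plan is to check directly that the underlying posets $(L(\mathbf M),\subseteq)$ and $(L_c(\mathbf M),\subseteq)$ are complete lattices whose meets and joins are the operations named in the definitions of $\mathbf L(\mathbf M)$ and $\mathbf L_c(\mathbf M)$, and then to read off the stated properties of $^\perp$ from Proposition~\ref{prop1} (and, for $\mathbf L_c(\mathbf M)$, from Lemma~\ref{lem3}).

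For $\mathbf L(\mathbf M)$: first I would observe that an arbitrary intersection $\bigcap_{j\in J}U_j$ of subsemimodules is again a subsemimodule; being contained in each $U_j$ and containing every subsemimodule contained in all the $U_j$, it is their infimum. Next, $\sum_{j\in J}U_j$ is a subsemimodule --- closure under $+$ is immediate, and closure under scalar multiplication follows from $a(\vec u_1+\dots+\vec u_n)=a\vec u_1+\dots+a\vec u_n$ together with the fact that each $a\vec u_k$ again lies in the corresponding $U_{j_k}$ --- and it is plainly the least subsemimodule containing all the $U_j$, hence their supremum. Since $\{\vec0\}\subseteq U\subseteq M$ for every $U\in L(\mathbf M)$, these are the bottom and top, so $\mathbf L(\mathbf M)$ is a complete lattice; that $^\perp$ is antitone on it is exactly the second item of Proposition~\ref{prop1}.

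For $\mathbf L_c(\mathbf M)$: here I would use that $^{\perp\perp}$ is a closure operator on $\mathbf L(\mathbf M)$ (Proposition~\ref{prop1}) and that $L_c(\mathbf M)$ is its set of fixed points. An arbitrary intersection of closed subsemimodules is again closed: from $\bigcap_{j}U_j\subseteq U_j$ and antitony, $(\bigcap_{j}U_j)^{\perp\perp}\subseteq U_j^{\perp\perp}=U_j$ for each $j$, hence $(\bigcap_{j}U_j)^{\perp\perp}\subseteq\bigcap_{j}U_j$, while the reverse inclusion always holds; so $\cap$ is the meet in $(L_c(\mathbf M),\subseteq)$ too. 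For joins, $\bigvee_{j}U_j=(\sum_{j}U_j)^{\perp\perp}$ is closed since $V^{\perp\perp\perp\perp}=V^{\perp\perp}$ for every $V$ (apply $U^{\perp\perp\perp}=U^\perp$ of Proposition~\ref{prop1} to $U=V^\perp$); it contains each $U_j$, and if $W\in L_c(\mathbf M)$ with $U_j\subseteq W$ for all $j$, then $\sum_{j}U_j\subseteq W$ and so $(\sum_{j}U_j)^{\perp\perp}\subseteq W^{\perp\perp}=W$, whence $\bigvee_{j}U_j$ is the supremum in $(L_c(\mathbf M),\subseteq)$. As $\{\vec0\}$ and $M$ are closed by Proposition~\ref{prop1} (indeed $\{\vec0\}^{\perp\perp}=M^\perp=\{\vec0\}$ and $M^{\perp\perp}=\{\vec0\}^\perp=M$) and are bottom and top, $\mathbf L_c(\mathbf M)$ is a complete lattice. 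Finally $^\perp$ restricted to $L_c(\mathbf M)$ is antitone by Proposition~\ref{prop1} and satisfies $U^{\perp\perp}=U$ by Definition~\ref{def1}, hence is an antitone involution; Lemma~\ref{lem3}(ii) then merely records its compatibility with arbitrary meets and joins.

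There is no real obstacle here: the theorem genuinely follows from the formal properties of the closure operator $^{\perp\perp}$ collected in Proposition~\ref{prop1}. The only point needing a little care is confirming that $L_c(\mathbf M)$ is closed under arbitrary intersections and under $V\mapsto(\text{sum})^{\perp\perp}$, so that the operations named in the definition of $\mathbf L_c(\mathbf M)$ really are the meet and join of the poset $(L_c(\mathbf M),\subseteq)$.
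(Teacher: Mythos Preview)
Your argument is correct and follows essentially the same route as the paper, which simply says the result ``follows from Proposition~\ref{prop1} and Lemma~\ref{lem3}''; you have merely unpacked what that means by verifying directly that $\bigcap$ and $\sum$ (resp.\ $\bigcap$ and $\bigvee$) are the meet and join in $L(\mathbf M)$ (resp.\ $L_c(\mathbf M)$) using the closure-operator properties of $^{\perp\perp}$. The only cosmetic difference is that the paper leans on Lemma~\ref{lem3} for the De~Morgan identities, whereas you derive the lattice structure of $L_c(\mathbf M)$ straight from Proposition~\ref{prop1} and note Lemma~\ref{lem3}(ii) as a consequence.
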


\begin{proof}
This follows from Proposition~\ref{prop1} and Lemma~\ref{lem3}.
\end{proof}

The lattices $\mathbf L(\mathbf M)$ and $\mathbf L_c(\mathbf M)$ are related as shown in the next theorem.

\begin{theorem}\label{th2}
\
\begin{enumerate}
\item[{\rm(i)}] Assume $(U\cap W)^{\perp\perp}=U^{\perp\perp}\cap W^{\perp\perp}$ for all $U,W\in L(\mathbf M)$. Then $^{\perp\perp}$ is a surjective homomorphism from $\mathbf L(\mathbf M)$ to $\mathbf L_c(\mathbf M)$.
\item[{\rm(ii)}] Assume
\[
(\bigcap\limits_{j\in J}U_j)^{\perp\perp}=\bigcap\limits_{j\in J}U_j^{\perp\perp}
\]
for every family $(U_j;j\in J)$ of subsemimodules of $\mathbf M$. Then $^{\perp\perp}$ is a complete surjective homomorphism from $\mathbf L(\mathbf M)$ to $\mathbf L_c(\mathbf M)$.
\end{enumerate}
\end{theorem}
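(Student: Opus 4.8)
The plan is to show that the closure map $\varphi\colon L(\mathbf M)\to L_c(\mathbf M)$ defined by $\varphi(U):=U^{\perp\perp}$ is a homomorphism of the indicated algebras, and to do this by checking each operation separately using only the formal properties of $^{\perp\perp}$ collected in Proposition~\ref{prop1}. First I would record that $\varphi$ is well defined into $L_c(\mathbf M)$: since by Proposition~\ref{prop1} the operator $^{\perp\perp}$ is a closure operator we have $(U^{\perp\perp})^{\perp\perp}=U^{\perp\perp}$, so $U^{\perp\perp}$ is closed, and in any case $U^{\perp\perp}=(U^\perp)^\perp\in L_c(\mathbf M)$ by Definition~\ref{def1}. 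The nullary operations are immediate from the last item of Proposition~\ref{prop1}: $\varphi(\{\vec0\})=\{\vec0\}^{\perp\perp}=M^\perp=\{\vec0\}$ and $\varphi(M)=M^{\perp\perp}=\{\vec0\}^\perp=M$. For the antitone involution, $\varphi(U^\perp)=U^{\perp\perp\perp}=U^\perp=(U^{\perp\perp})^\perp=\varphi(U)^\perp$, using $U^{\perp\perp\perp}=U^\perp$ from Proposition~\ref{prop1}.

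Next I would treat the lattice operations, where the roles are: $+$ in $\mathbf L(\mathbf M)$ must be sent to $\vee$ in $\mathbf L_c(\mathbf M)$, and $\cap$ to $\cap$. Preservation of meet is exactly the standing hypothesis: $\varphi(U\cap W)=(U\cap W)^{\perp\perp}=U^{\perp\perp}\cap W^{\perp\perp}=\varphi(U)\cap\varphi(W)$ in case~(i), and the corresponding identity for arbitrary intersections in case~(ii). For the join, note $\varphi(U)\vee\varphi(W)=(U^{\perp\perp}+W^{\perp\perp})^{\perp\perp}$ by the definition of $\vee$. From $U\subseteq U^{\perp\perp}$ and $W\subseteq W^{\perp\perp}$ we get $U+W\subseteq U^{\perp\perp}+W^{\perp\perp}$, hence $(U+W)^{\perp\perp}\subseteq(U^{\perp\perp}+W^{\perp\perp})^{\perp\perp}$ by monotonicity of $^{\perp\perp}$. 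Conversely, $U\subseteq U+W$ and $W\subseteq U+W$ give $U^{\perp\perp}+W^{\perp\perp}\subseteq(U+W)^{\perp\perp}$, and applying $^{\perp\perp}$ together with closedness of $(U+W)^{\perp\perp}$ yields $(U^{\perp\perp}+W^{\perp\perp})^{\perp\perp}\subseteq(U+W)^{\perp\perp}$. Hence $\varphi(U+W)=(U+W)^{\perp\perp}=\varphi(U)\vee\varphi(W)$. The identical sandwich argument with $\sum_{j\in J}U_j$ replacing $U+W$ proves $\varphi(\sum_{j\in J}U_j)=\bigvee_{j\in J}\varphi(U_j)$, which together with the hypothesis on arbitrary intersections gives the complete homomorphism in~(ii).

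Surjectivity is then immediate: every $V\in L_c(\mathbf M)$ satisfies $V^{\perp\perp}=V$, so $\varphi(V)=V$ and $\varphi$ is onto; in fact $\varphi$ restricts to the identity on $L_c(\mathbf M)$. Assembling the above yields (i), and the extra join/meet computations for arbitrary families yield (ii).

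I do not anticipate a genuine obstacle, since the one nontrivial point --- that $^{\perp\perp}$ turns intersections into intersections --- is exactly what has been assumed. The points that merely require attention are that finite meet-preservation does not by itself force preservation of arbitrary intersections (which is why (ii) carries its own stronger hypothesis), and that the second inclusion in the join computation genuinely uses idempotency of the closure operator, i.e.\ that $(U+W)^{\perp\perp}$ is closed.
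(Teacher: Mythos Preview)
Your proof is correct and follows essentially the same plan as the paper's: verify that $^{\perp\perp}$ preserves each operation using the formal properties of $^\perp$ from Proposition~\ref{prop1}, with meet-preservation supplied by the hypothesis. The only cosmetic difference is in the join computation: the paper reaches $(U+W)^{\perp\perp}=(U^{\perp\perp}+W^{\perp\perp})^{\perp\perp}$ via the De~Morgan identity $(U+W)^\perp=U^\perp\cap W^\perp$ of Lemma~\ref{lem3} together with $U^\perp=U^{\perp\perp\perp}$, while you obtain it by a direct monotonicity sandwich; both are routine closure-operator arguments.
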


\begin{proof}
Let $U,W,U_j\in L(\mathbf M)$ for all $j\in J$.
\begin{enumerate}
\item[(i)] We have
\begin{align*}
    (U+W)^{\perp\perp} & =(U^\perp\cap W^\perp)^\perp=(U^{\perp\perp\perp}\cap W^{\perp\perp\perp})^\perp=(U^{\perp\perp}+W^{\perp\perp})^{\perp\perp}=U^{\perp\perp}\vee W^{\perp\perp}, \\
(U\cap W)^{\perp\perp} & =U^{\perp\perp}\cap W^{\perp\perp}, \\
(U^\perp)^{\perp\perp} & =(U^{\perp\perp})^\perp, \\
\{\vec0\}^{\perp\perp} & =\{\vec0\}, \\
        M^{\perp\perp} & =M.
\end{align*}
\item[(ii)] We have
\begin{align*}
   (\sum_{j\in J}U_j)^{\perp\perp} & =(\bigcap_{j\in J}U_j^\perp)^\perp=(\bigcap_{j\in J}U_j^{\perp\perp\perp})^\perp=(\sum_{j\in J}U_j^{\perp\perp})^{\perp\perp}=\bigvee_{j\in J}U_j^{\perp\perp}, \\
(\bigcap_{j\in J}U_j)^{\perp\perp} & =\bigcap_{j\in J}U_j^{\perp\perp}, \\
            (U^\perp)^{\perp\perp} & =(U^{\perp\perp})^\perp, \\
            \{\vec0\}^{\perp\perp} & =\{\vec0\}, \\
                    M^{\perp\perp} & =M.
\end{align*}
\end{enumerate}
\end{proof}

\begin{example}\label{ex1}
Consider the semiring $(S,\oplus,\cdot,0,1)$ where $S=\{0,1\}$ and the operations $\oplus$ and $\cdot$ are determined by the tables
\[
\begin{array}{c|cc}
\oplus & 0 & 1 \\
\hline
   0   & 0 & 1 \\
   1   & 1 & 1
\end{array}
\quad\quad\quad
\begin{array}{c|ccc}
\cdot & 0 & 1 \\
\hline
  0   & 0 & 0 \\
  1   & 0 & 1
\end{array}
\]
Put $\mathbf M:=(S,\oplus,\cdot,0)^2$. Then $\mathbf M$ has the following subspaces:
\begin{align*}
U_1 & =\{(0,0)\}, \\
U_2 & =\{(0,0),(0,1)\}, \\
U_3 & =\{(0,0),(1,1)\}, \\
U_4 & =\{(0,0),(1,0)\}, \\
U_5 & =\{(0,0),(0,1),(1,1)\}, \\
U_6 & =\{(0,0),(1,0),(1,1)\}, \\
  M & =\{(0,0),(0,1),(1,0),(1,1)\}.
\end{align*}
The Hasse diagram of $(L(\mathbf M),\subseteq)$ is presented in Figure~1:

\vspace*{-2mm}

\[
\setlength{\unitlength}{7mm}
\begin{picture}(10,9)
\put(5,2){\circle*{.3}}
\put(1,4){\circle*{.3}}
\put(5,4){\circle*{.3}}
\put(9,4){\circle*{.3}}
\put(3,6){\circle*{.3}}
\put(7,6){\circle*{.3}}
\put(5,8){\circle*{.3}}
\put(5,2){\line(-2,1)4}
\put(5,2){\line(2,1)4}
\put(5,2){\line(0,1)2}
\put(5,8){\line(-1,-1)4}
\put(5,8){\line(1,-1)4}
\put(5,4){\line(1,1)2}
\put(5,4){\line(-1,1)2}
\put(4.675,1.25){$U_1$}
\put(.05,3.85){$U_2$}
\put(5.4,3.85){$U_3$}
\put(9.4,3.85){$U_4$}
\put(2.05,5.85){$U_5$}
\put(7.4,5.85){$U_6$}
\put(4.7,8.35){$M$}
\put(4.2,.3){{\rm Fig.~1}}
\end{picture}
\]

\vspace*{-3mm}

The lattice $\mathbf L(\mathbf M)$ is not modular because it contains sublattices isomorphic to ${\rm N}_5$, e.g.\ the sublattice $\{U_1,U_2,U_4,U_6,M\}$. The unary operation $^\perp$ looks as follows:
\[
\begin{array}{c|ccccccccc}
   U    & U_1 & U_2 & U_3 & U_4 & U_5 & U_6 &  M \\
\hline
U^\perp &  M  & U_4 & U_1 & U_2 & U_1 & U_1 & U_1
\end{array}
\]
Hence, $L_c(\mathbf M)=\{U_1,U_2,U_4,M\}$. The Hasse diagram of $(L_c(\mathbf M),\subseteq)$ is depicted in Figure~2:

\vspace*{-6mm}

\[
\setlength{\unitlength}{7mm}
\begin{picture}(6,7)
\put(3,2){\circle*{.3}}
\put(1,4){\circle*{.3}}
\put(5,4){\circle*{.3}}
\put(3,6){\circle*{.3}}
\put(3,2){\line(-1,1)2}
\put(3,2){\line(1,1)2}
\put(3,6){\line(-1,-1)2}
\put(3,6){\line(1,-1)2}
\put(2.675,1.25){$U_1$}
\put(.05,3.85){$U_2$}
\put(5.4,3.85){$U_4$}
\put(2.7,6.35){$M$}
\put(2.2,.3){{\rm Fig.~2}}
\end{picture}
\]

\vspace*{-3mm}

\end{example}

\section{Splitting subsemimodules}

It can be easily checked that for a subsemimodule $U$ of $\mathbf M$, the semimodule $U^\perp$ need not be a complement of $U$ in the lattice $\mathbf L(\mathbf M)$ or $\mathbf L_c(\mathbf M)$, see e.g.\ Example~\ref{ex1}. This is the motivation for introducing the following concept.

\begin{definition}
We call a {\em subsemimodule} $U$ of $\mathbf M$ {\em splitting} if $U+U^\perp=M$ and $U\cap U^\perp=\{\vec0\}$. Let $L_s(\mathbf M)$ denote the set of all splitting subspaces of $\mathbf M$.
\end{definition}

Clearly, $\{\vec0\},M\in L_s(\mathbf M)$.

\begin{example}
The splitting subsemimodules of the semimodule from Example~\ref{ex1} are exactly the closed ones.
\end{example}

\begin{lemma}
Every splitting subsemimodule of $\mathbf M$ is closed.
\end{lemma}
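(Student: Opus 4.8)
The plan is to show that a splitting subsemimodule $U$ satisfies $U^{\perp\perp} = U$, i.e.\ that the inclusion $U^{\perp\perp} \subseteq U$ holds (the reverse inclusion $U \subseteq U^{\perp\perp}$ is always available from Proposition~\ref{prop1}). So suppose $U$ is splitting, meaning $U + U^\perp = M$ and $U \cap U^\perp = \{\vec0\}$, and let $\vec z \in U^{\perp\perp}$ be arbitrary; I want to conclude $\vec z \in U$.

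First I would use the condition $U + U^\perp = M$ to write $\vec z = \vec x + \vec y$ with $\vec x \in U$ and $\vec y \in U^\perp$. The goal is then to force $\vec y = \vec0$, which would give $\vec z = \vec x \in U$. To do this, I would test $\vec y$ against itself using the inner product. On the one hand, since $\vec y \in U^\perp$ and $\vec x \in U \subseteq U^{\perp\perp}$, we get $\vec x \vec y = 0$ (note $U^{\perp\perp} = U^{\perp\perp\perp\perp} \supseteq$ ... more directly: elements of $U$ are orthogonal to elements of $U^\perp$ by definition). On the other hand, since $\vec z \in U^{\perp\perp}$ and $\vec y \in U^\perp$, we get $\vec z \vec y = 0$. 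Combining, $0 = \vec z\vec y = (\vec x + \vec y)\vec y = \vec x\vec y + \vec y\vec y = \vec y\vec y$, so $\vec y \perp \vec y$.

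The subtle point — and the step I expect to be the main obstacle — is that $\vec y \vec y = \sum_{i \in I} y_i^2 = 0$ does not, over a general semiring, force $\vec y = \vec0$; there is no cancellation and squares can vanish. So the "$U \cap U^\perp = \{\vec0\}$" half of the splitting hypothesis must be what rescues the argument, and I would look for a way to land $\vec y$ (or a related element) inside $U \cap U^\perp$. The natural move: from $\vec y \perp \vec y$ and $\vec x \perp \vec y$ we also get $\vec z \perp \vec y$ as noted, but more usefully I would try to show $\vec y \in U$. Indeed $\vec y = \vec z - \vec x$ is not meaningful (no subtraction), so instead I reconsider: we have $\vec y\vec y = 0$; I would then show that $\vec y$ is orthogonal to all of $U^\perp$? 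That is not immediate. Alternatively — and this is probably the intended route — observe that since $\vec x \in U$ and (to be shown) $\vec x \in \{\vec z\}$-related... Let me instead argue via $U^\perp$ splitting: it is a standard consequence that if $U$ is splitting then so is $U^\perp$, and then apply the decomposition $M = U^\perp + U^{\perp\perp}$ to $\vec z$, writing $\vec z = \vec u + \vec v$ with $\vec u \in U^\perp$, $\vec v \in U^{\perp\perp}$; pairing with $\vec z \in U^{\perp\perp}$ gives $\vec u \vec u = 0$ again, so the same obstacle recurs and the genuine content is an argument that $\vec y \vec y = 0$ together with $\vec y \in U^\perp$ implies $\vec y \in U$, whence $\vec y \in U \cap U^\perp = \{\vec0\}$.

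To close that gap I would show directly that $\vec y \in U$: write, using $U + U^\perp = M$ once more, every basis-type manipulation in coordinates, or better, pair $\vec y$ against an arbitrary $\vec w \in U^\perp$. We have $\vec z \vec w = 0$ (as $\vec z \in U^{\perp\perp}$), and $\vec x \vec w$ need not vanish, so this does not immediately give $\vec y \vec w = 0$. Hence the cleanest path: from $\vec z = \vec x + \vec y$ with $\vec x \in U$, $\vec y \in U^\perp$, and the already-derived $\vec x\vec y = 0$, I compute for any $\vec u \in U$: $\vec z \vec u = \vec x\vec u + \vec y\vec u = \vec x\vec u$ (since $\vec y \in U^\perp$ kills $\vec u \in U$), and also for any $\vec u \in U$, $\vec z\vec u = \vec z\vec u$ trivially — unhelpful. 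I therefore expect the actual proof to exploit that $\vec z \in U^{\perp\perp}$ means $\vec z \perp U^\perp$, take the decomposition $\vec z = \vec x + \vec y$, deduce $\vec y \vec y = 0$ as above, then observe $\vec y \in U^{\perp\perp}$ as well (because $\vec y = \vec z + (\text{something in } U \subseteq U^{\perp\perp})$? — again no subtraction), so the decisive ingredient is likely the identity $\vec z\vec y = \vec x\vec y + \vec y\vec y$ giving $\vec y\vec y=0$, combined with showing $\vec y \in U$ via $\vec y\vec y = 0 \Rightarrow \vec y \perp \vec y \Rightarrow$ (pairing componentwise and using that $U^\perp{}^\perp \ni \vec y$ forces...). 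In any case, I would organize the write-up as: (1) recall $U \subseteq U^{\perp\perp}$; (2) take $\vec z \in U^{\perp\perp}$, decompose via the splitting hypothesis; (3) derive $\vec y\vec y = 0$; (4) push $\vec y$ into $U \cap U^\perp$ and conclude $\vec y = \vec0$, hence $\vec z \in U$; the single delicate inference in step (4) is where I anticipate needing care, and I would expect the authors' proof to supply exactly the trick that makes $\vec y\vec y = 0$ usable despite the absence of cancellation in $S$.
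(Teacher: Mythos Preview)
Your setup is right: take $\vec z\in U^{\perp\perp}$, use $U+U^\perp=M$ to write $\vec z=\vec x+\vec y$ with $\vec x\in U$, $\vec y\in U^\perp$. But from that point on the proposal stalls, and the place where it stalls is not a minor technicality you can patch; it is the whole content of the lemma. You are trying to prove $\vec y=\vec0$ so that you can invoke $U\cap U^\perp=\{\vec0\}$, and you correctly observe that $\vec y\vec y=0$ does not give this over a general semiring. None of the variants you sketch (pairing $\vec y$ against $U^\perp$, trying to show $\vec y\in U$, passing to $U^\perp$ splitting) gets around the absence of subtraction; in fact, once the paper's argument shows $\vec z=\vec x$, we still cannot cancel to deduce $\vec y=\vec0$, so your target statement may simply be unreachable.

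The paper's trick is to abandon the goal ``$\vec y=\vec0$'' entirely and instead prove $\vec z=\vec x$ directly via Lemma~\ref{lem2}: it suffices to show $\vec z\vec d=\vec x\vec d$ for every $\vec d\in M$. The key move you are missing is to decompose the \emph{test vector} as well, writing $\vec d=\vec e+\vec f$ with $\vec e\in U$, $\vec f\in U^\perp$. Then one computes, using only additivity and the orthogonality relations $\vec z\vec f=\vec y\vec e=\vec x\vec f=0$,
\[
\vec z\vec d=\vec z\vec e+\vec z\vec f=\vec z\vec e=(\vec x+\vec y)\vec e=\vec x\vec e=\vec x\vec e+\vec x\vec f=\vec x\vec d,
\]
and Lemma~\ref{lem2} gives $\vec z=\vec x\in U$. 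Note that this uses only the hypothesis $U+U^\perp=M$; the condition $U\cap U^\perp=\{\vec0\}$, which you were trying to leverage, never enters.
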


\begin{proof}
Assume $U\in L_s(\mathbf M)$, $\vec a\in U^{\perp\perp}$ and $\vec d\in M$. Then there exist $\vec b,\vec e\in U$ and $\vec c,\vec f\in U^\perp$ with $\vec b+\vec c=\vec a$ and $\vec e+\vec f=\vec d$. Since
\[
\vec a\in U^{\perp\perp},\vec b,\vec e\in U\text{ and }\vec c,\vec f\in U^\perp,
\]
we have
\[
\vec a\vec f=\vec c\vec e=\vec b\vec f=0
\]
and hence
\[
\vec a\vec d=\vec a(\vec e+\vec f)=\vec a\vec e+\vec a\vec f=\vec a\vec e=(\vec b+\vec c)\vec e=\vec b\vec e+\vec c\vec e=\vec b\vec e=\vec b\vec e+\vec b\vec f=\vec b(\vec e+\vec f)=\vec b\vec d.
\]
According to Lemma~\ref{lem2}, $\vec a=\vec b\in U$. This shows $U^{\perp\perp}\subseteq U$. The converse inclusion follows from Proposition~\ref{prop1}.
\end{proof}

Recall that if $(P,\leq,0,1)$ is a bounded poset, then a unary operation $'$ on $P$ is called a {\em complementation} if $\sup(x,x')=1$ and $\inf(x,x')=0$ for all $x\in P$. If $'$ is, moreover, an antitone involution then $(P,\leq,{}',0,1)$ is called an {\em orthoposet}. In the sequel, we will denote $\sup$ and $\inf$ by $\vee$ and $\wedge$, respectively, provided they exist.

\begin{corollary}
We have that $\mathbf L_s(\mathbf M):=(L_s(\mathbf M),\subseteq,{}^\perp,\{\vec0\},M)$ is an orthoposet.
\end{corollary}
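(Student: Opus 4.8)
The plan is to verify, one by one, the conditions in the definition of an orthoposet for $\mathbf L_s(\mathbf M)$: that $(L_s(\mathbf M),\subseteq)$ is a bounded poset, that $^\perp$ is a well-defined unary operation on $L_s(\mathbf M)$, that $^\perp$ is an antitone involution, and that $^\perp$ is a complementation. Boundedness is immediate, since $L_s(\mathbf M)$ is a family of subsets of $M$ ordered by inclusion, it contains $\{\vec0\}$ and $M$ (as noted right after the definition of splitting subsemimodule), and every splitting subsemimodule lies between these two bounds.

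First I would check that $^\perp$ maps $L_s(\mathbf M)$ into itself, i.e.\ that $U^\perp$ is splitting whenever $U$ is. By the preceding lemma a splitting subsemimodule is closed, so $U^{\perp\perp}=U$; hence $U^\perp+U^{\perp\perp}=U+U^\perp=M$ and $U^\perp\cap U^{\perp\perp}=U\cap U^\perp=\{\vec0\}$, which says exactly that $U^\perp\in L_s(\mathbf M)$. The same identity $U^{\perp\perp}=U$ shows that $^\perp$ is an involution on $L_s(\mathbf M)$, while Proposition~\ref{prop1} gives that $^\perp$ is antitone and interchanges $\{\vec0\}$ and $M$.

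It remains to see that $^\perp$ is a complementation, and the point to be careful about is that the supremum and infimum must be computed inside the poset $(L_s(\mathbf M),\subseteq)$ and not in $\mathbf L(\mathbf M)$ or $\mathbf L_c(\mathbf M)$. For the infimum: $\{\vec0\}\in L_s(\mathbf M)$ is a lower bound of $U$ and $U^\perp$, and any lower bound $W$ satisfies $W\subseteq U\cap U^\perp=\{\vec0\}$; hence $\inf(U,U^\perp)=\{\vec0\}$. For the supremum: $M$ is trivially an upper bound, and if $W\in L_s(\mathbf M)$ satisfies $U\subseteq W$ and $U^\perp\subseteq W$, then, $W$ being closed under $+$, we obtain $M=U+U^\perp\subseteq W$, so $W=M$; hence $\sup(U,U^\perp)=M$.

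I do not anticipate a genuine obstacle, since every ingredient (splitting $\Rightarrow$ closed, Proposition~\ref{prop1}, and the defining equalities $U+U^\perp=M$, $U\cap U^\perp=\{\vec0\}$) is already available; the only thing worth stressing is that the required extrema are evaluated in the sub-poset of splitting subsemimodules, and the arguments above show that the candidates $M$ and $\{\vec0\}$, which do belong to $L_s(\mathbf M)$, are indeed the least upper bound and the greatest lower bound there.
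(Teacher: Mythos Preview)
Your proof is correct and is precisely the routine verification that the paper leaves implicit (the corollary is stated without proof). You use exactly the ingredients the paper intends: the preceding lemma that splitting implies closed (to get that $^\perp$ is an involution on $L_s(\mathbf M)$ and that $U^\perp$ is again splitting), Proposition~\ref{prop1} for antitonicity, and the defining equalities $U+U^\perp=M$, $U\cap U^\perp=\{\vec0\}$ for the complementation; your care in computing the sup and inf inside $L_s(\mathbf M)$ is appropriate and the arguments are sound.
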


It is a question if the poset $(L_s(\mathbf M),\subseteq)$ of splitting subsemimodules of $\mathbf M$ is a lattice depending of the choice of the semiring $\mathbf S$. It turns out that in some particular cases this is true.

Assume that $\mathbf S=(S,\vee,\wedge,0,1)$ is a non-trivial bounded distributive lattice where $0$ is meet-irreducible, i.e.\ $x\wedge y=0$ implies $0\in\{x,y\}$, let $I$ be a non-empty set, put
\[
M:=\{\vec x\in S^I\mid x_i=0\text{ for almost all }i\in I\}
\]
and consider the submodule $\mathbf M=(M,\vee,\wedge,\vec0)$ of $(S,\vee,\wedge,0)^I$. For every subset $J$ of $I$ put $U_J:=\{\vec x\in M\mid x_i=0\text{ for all }i\in J\}$.

A mapping $f$ from a poset $(P,\leq)$ to a poset $(Q,\leq)$ is called an {\em antiisomorphism} if it is bijective and if for all $x,y\in P$, $x\leq y$ is equivalent to $f(y)\leq f(x)$.

Now we can prove the following.

\begin{theorem}
Let $(S,\vee,\wedge,0,1)$ be a non-trivial bounded distributive lattice where $0$ is meet-irreducible and put
\[
M:=\{\vec x\in S^I\mid x_i=0\text{ for almost all }i\in I\}
\]
for a non-empty set $I$. Then $(L_s(\mathbf M),\subseteq)=(L_c(\mathbf M),\subseteq)$ is an atomic Boolean algebra and the mapping $J\mapsto U_J$ an antiisomorphism between the posets $(2^I,\subseteq)$ and $(L_s(\mathbf M),\subseteq)$.
\end{theorem}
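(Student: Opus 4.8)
The plan is to reduce the whole statement to one combinatorial observation about supports. Write $\operatorname{supp}(\vec x):=\{i\in I\mid x_i\neq0\}$ for $\vec x\in M$. Since here $\oplus=\vee$ and scalar multiplication is $\wedge$, the inner product becomes $\vec x\vec y=\bigvee_{i\in I}(x_i\wedge y_i)$. Because $0$ is the least element of $S$, a join of elements of $S$ equals $0$ only when each of them is $0$; hence $\vec x\perp\vec y$ iff $x_i\wedge y_i=0$ for all $i\in I$, and by meet-irreducibility of $0$ this is equivalent to saying that for every $i\in I$ we have $x_i=0$ or $y_i=0$, i.e.\ $\operatorname{supp}(\vec x)\cap\operatorname{supp}(\vec y)=\emptyset$. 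This is the one place where both the meet-irreducibility of $0$ and the non-triviality of $\mathbf S$ (so that $1\neq0$) really matter, and I expect it to be the only genuinely delicate step.

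Next I would record the easy structural facts. Every $U_K$ with $K\subseteq I$ is a subsemimodule of $\mathbf M$, with $U_\emptyset=M$ and $U_I=\{\vec0\}$, and, exhibiting the unit basis vectors $\vec b_i$ (legitimate since $1\neq0$), one gets $\operatorname{supp}(U_K)=I\setminus K$. From the support characterization of $\perp$ it follows that for any $C\subseteq M$ one has $C^\perp=U_{\operatorname{supp}(C)}$, where $\operatorname{supp}(C):=\bigcup_{\vec y\in C}\operatorname{supp}(\vec y)$; in particular $(U_J)^\perp=U_{I\setminus J}$ for every $J\subseteq I$. Since $L_c(\mathbf M)=\{U^\perp\mid U\in L(\mathbf M)\}$ by Definition~\ref{def1} and $U_{I\setminus J}\in L(\mathbf M)$, this yields $L_c(\mathbf M)=\{U_J\mid J\subseteq I\}$.

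Then I would check that each $U_J$ is splitting. The intersection $U_J\cap U_J^\perp=U_J\cap U_{I\setminus J}$ consists of the vectors vanishing in every coordinate, so it equals $\{\vec0\}$. For $U_J+U_J^\perp=M$, given $\vec x\in M$ define $\vec y$ by $y_i:=x_i$ for $i\notin J$ and $y_i:=0$ for $i\in J$, and $\vec z$ by $z_i:=x_i$ for $i\in J$ and $z_i:=0$ for $i\notin J$; both lie in $M$ (their supports are contained in $\operatorname{supp}(\vec x)$), $\vec y\in U_J$, $\vec z\in U_{I\setminus J}$, and $\vec y\vee\vec z=\vec x$. Together with the already proved fact that every splitting subsemimodule of $\mathbf M$ is closed, this gives $L_s(\mathbf M)=L_c(\mathbf M)=\{U_J\mid J\subseteq I\}$.

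Finally I would analyse the map $J\mapsto U_J$. It is onto $L_s(\mathbf M)$ by the previous step; it is one-to-one because if $J_1\neq J_2$, say $j\in J_1\setminus J_2$, then $\vec b_j\in U_{J_2}\setminus U_{J_1}$; and $J_1\subseteq J_2$ trivially forces $U_{J_2}\subseteq U_{J_1}$, while conversely $U_{J_2}\subseteq U_{J_1}$ gives $I\setminus J_2=\operatorname{supp}(U_{J_2})\subseteq\operatorname{supp}(U_{J_1})=I\setminus J_1$, i.e.\ $J_1\subseteq J_2$. Hence $J\mapsto U_J$ is an antiisomorphism from $(2^I,\subseteq)$ onto $(L_s(\mathbf M),\subseteq)$. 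Composing it with the complementation antiautomorphism $J\mapsto I\setminus J$ of $(2^I,\subseteq)$ turns it into the order isomorphism $J\mapsto U_{I\setminus J}$, so $(L_s(\mathbf M),\subseteq)$ is isomorphic to the atomic Boolean algebra $(2^I,\subseteq)$ and is therefore itself an atomic Boolean algebra. Apart from the support characterization of $\perp$, the only care needed is with the extreme cases $J=\emptyset$ and $J=I$; everything else is routine bookkeeping.
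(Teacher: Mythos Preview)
Your proof is correct and follows essentially the same approach as the paper: both reduce orthogonality to disjointness of supports via meet-irreducibility of $0$, show $U^\perp=U_K$ with $K$ the support set, deduce $(U_J)^\perp=U_{I\setminus J}$ and $L_c(\mathbf M)=\{U_J\mid J\subseteq I\}$, and verify the antiisomorphism using unit basis vectors. Your write-up is in fact more complete than the paper's, which leaves the verification that each $U_J$ is splitting (and hence that $L_s(\mathbf M)=L_c(\mathbf M)$) and the atomic-Boolean-algebra conclusion implicit; your explicit coordinate-splitting $\vec x=\vec y\vee\vec z$ and the final composition with $J\mapsto I\setminus J$ are exactly the steps needed to close those gaps.
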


\begin{proof}
It is clear that for $\vec a,\vec b\in M$ we have $\vec a\perp\vec b$ if and only if for all $i\in I$ either $a_i=0$ or $b_i=0$ (or both). Hence, for $U\in L(\mathbf M)$ we have $U^\perp=U_K$ where
\[
K=\{i\in I\mid\text{there exists some }\vec x\in U\text{ with }x_i\neq0\}.
\]
Obviously, $U_J^\perp=U_{I\setminus J}$ for all $J\subseteq I$. This shows $L_c(\mathbf M)=\{U_J\mid J\subseteq I\}$. Now let $S,T\subseteq I$. If $S\subseteq T$ then $U_T\subseteq U_S$. Conversely, assume $U_T\subseteq U_S$. Suppose $S\not\subseteq T$. Then there exists some $j\in S\setminus T$. Let $\vec a$ denote the element of $M$ with $a_j=1$ and $a_i=0$ otherwise. Then $\vec a\in U_T\setminus U_S$ contradicting $U_T\subseteq U_S$. Hence $S\subseteq T$. This shows that $S\subseteq T$ is equivalent to $U_T\subseteq U_S$ completing the proof of the theorem.
\end{proof}

It should be remarked that in any non-trivial bounded chain the smallest element is meet-irreducible.

\section{The poset of projections}

The next concept plays a crucial role in our study.

\begin{definition}
A {\em projection} of $\mathbf M$ is a linear mapping $P$ from $\mathbf M$ to $\mathbf M$ satisfying $P\circ P=P$ and $(P\vec x)\vec y=\vec x(P\vec y)$ for all $\vec x,\vec y\in M$. We write $P\vec x$ instead of $P(\vec x)$. Let $\Pro(\mathbf M)$ denote the set of all projections of $\mathbf M$ and $P,Q\in\Pro(\mathbf M)$. We define $P\leq Q$ if $P(M)\subseteq Q(M)$, and, moreover, $(P+Q)(\vec x):=P\vec x+Q\vec x$ and $PQ\vec x:=P(Q(\vec x))$ for all $\vec x\in M$. Let $\mathbf0$ denote the constant mapping from $M$ to $M$ with value $\vec0$ and $\mathbf I$ the identical mapping from $M$ to $M$.
\end{definition}

Clearly, $\mathbf0,\mathbf I\in\Pro(\mathbf M)$.

\begin{lemma}\label{lem5}
Let $P,Q\in\Pro(\mathbf M)$.
\begin{enumerate}
\item[{\rm(i)}] The following are equivalent:
\begin{enumerate}
\item[{\rm(a)}] $P\leq Q$,
\item[{\rm(b)}] $PQ=P$,
\item[{\rm(c)}] $QP=P$.
\end{enumerate}
\item[{\rm(ii)}] Assume $PQ=QP$. Then the infimum $P\wedge Q$ exists and $P\wedge Q=PQ$.
\end{enumerate}
\end{lemma}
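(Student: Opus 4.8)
The plan is to prove (i) by establishing separately the two equivalences \textrm{(a)}$\Leftrightarrow$\textrm{(c)} and \textrm{(b)}$\Leftrightarrow$\textrm{(c)}, and then to deduce (ii) from (i).

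For \textrm{(c)}$\Rightarrow$\textrm{(a)}: if $QP=P$, then for every $\vec x\in M$ we have $P\vec x=QP\vec x=Q(P\vec x)\in Q(M)$, so $P(M)\subseteq Q(M)$. For \textrm{(a)}$\Rightarrow$\textrm{(c)}: if $P(M)\subseteq Q(M)$, then for every $\vec x\in M$ we may write $P\vec x=Q\vec z$ for some $\vec z\in M$, and idempotency of $Q$ gives $QP\vec x=Q(Q\vec z)=Q\vec z=P\vec x$; hence $QP=P$. For the equivalence \textrm{(b)}$\Leftrightarrow$\textrm{(c)} — the only place where the inner product enters — I would assume $QP=P$ and compute, for arbitrary $\vec x,\vec y\in M$, using self-adjointness of $P$ and of $Q$,
\[
(PQ\vec x)\vec y=(Q\vec x)(P\vec y)=\vec x(QP\vec y)=\vec x(P\vec y)=(P\vec x)\vec y .
\]
By Lemma~\ref{lem2}(i) this yields $PQ\vec x=P\vec x$ for all $\vec x$, i.e.\ $PQ=P$; the reverse implication is obtained by interchanging the roles of $P$ and $Q$ in the same computation. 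This closes the cycle and proves (i).

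For (ii), assume $PQ=QP$. First I would check that $PQ\in\Pro(\mathbf M)$: it is linear as a composite of linear maps; it is idempotent, since $PQPQ=P(QP)Q=P(PQ)Q=PQ$; and it is self-adjoint, since $(PQ\vec x)\vec y=(Q\vec x)(P\vec y)=\vec x(QP\vec y)=\vec x(PQ\vec y)$ for all $\vec x,\vec y\in M$. Next, $PQ$ is a lower bound of $P$ and $Q$ in $(\Pro(\mathbf M),\leq)$: from $(PQ)P=P(QP)=P(PQ)=PQ$ and $(PQ)Q=P(QQ)=PQ$, part (i) gives $PQ\leq P$ and $PQ\leq Q$. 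Finally, if $R\in\Pro(\mathbf M)$ satisfies $R\leq P$ and $R\leq Q$, then by (i) $PR=R$ and $QR=R$, whence $(PQ)R=P(QR)=PR=R$, so again by (i) $R\leq PQ$. Thus $PQ$ is the greatest lower bound of $\{P,Q\}$, i.e.\ $P\wedge Q$ exists and equals $PQ$.

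The only genuinely non-routine point is the equivalence \textrm{(b)}$\Leftrightarrow$\textrm{(c)} in part (i): it rests on the self-adjointness of projections with respect to the inner product combined with the separation property of Lemma~\ref{lem2}(i). All the remaining steps are straightforward manipulations of idempotent self-adjoint operators, and part (ii) is then a formal consequence of (i).
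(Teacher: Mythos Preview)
Your proof is correct and follows essentially the same approach as the paper. The only organizational difference is in part~(i): the paper runs the cycle \textrm{(a)}$\Rightarrow$\textrm{(b)}$\Rightarrow$\textrm{(c)}$\Rightarrow$\textrm{(a)}, proving \textrm{(a)}$\Rightarrow$\textrm{(b)} via an inner-product computation (writing $P\vec b=Q\vec c$ and expanding $(PQ\vec a)\vec b$), whereas you prove \textrm{(a)}$\Leftrightarrow$\textrm{(c)} purely algebraically and then \textrm{(b)}$\Leftrightarrow$\textrm{(c)} via self-adjointness and Lemma~\ref{lem2}(i); your \textrm{(a)}$\Rightarrow$\textrm{(c)} is in fact slightly more elementary since it avoids the inner product. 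Part~(ii) is handled identically in both.
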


\begin{proof}
Let $\vec a,\vec b\in M$.
\begin{enumerate}
\item[(i)] (a) $\Rightarrow$ (b): Since $P\vec b\in P(M)\subseteq Q(M)$, there exists some $\vec c\in M$ with $P\vec b=Q\vec c$. Now
\[
(PQ\vec a)\vec b=(Q\vec a)(P\vec b)=(Q\vec a)(Q\vec c)=\vec a(Q^2\vec c)=\vec a(Q\vec c)=\vec a(P\vec b)=(P\vec a)\vec b
\]
showing $PQ=P$. \\
(b) $\Rightarrow$ (c): We have
\[
(QP\vec a)\vec b=(P\vec a)(Q\vec b)=\vec a(PQ\vec b)=\vec a(P\vec b)=(P\vec a)\vec b
\]
showing $QP=P$. \\
(c) $\Rightarrow$ (a): We have $P(M)=QP(M)\subseteq Q(M)$.
\item[(ii)] Let $R\in\Pro(\mathbf M)$. Obviously, $PQ$ is a linear mapping from $\mathbf M$ to itself. Moreover,
\begin{align*}
          (PQ)^2 & =PQPQ=P^2Q^2=PQ, \\
(PQ\vec a)\vec b & =(Q\vec a)(P\vec b)=\vec a(QP\vec b)=\vec a(PQ\vec b)
\end{align*}
showing $PQ\in\Pro(\mathbf M)$. Now $(PQ)P=PQ$, i.e.\ $PQ\leq P$, and $(PQ)Q=PQ$, i.e.\ $PQ\leq Q$. Moreover, if $R\leq P,Q$ then $R(PQ)=(RP)Q=RQ=R$ and hence $R\leq PQ$. This shows $PQ=P\wedge Q$.
\end{enumerate}
\end{proof}

Moreover, we can prove the following.

\begin{theorem}\label{th3}
Let $\mathbf M$ be a semimodule. Then $(\Pro(\mathbf M),\leq,\mathbf0,\mathbf I)$ is a bounded poset.
\end{theorem}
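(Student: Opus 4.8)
The plan is to check the three order axioms for $\leq$ on $\Pro(\mathbf M)$ and then exhibit $\mathbf0$ and $\mathbf I$ as least and greatest elements. Reflexivity and transitivity are immediate from the definition of $\leq$ via image inclusion: $P(M)\subseteq P(M)$ gives $P\leq P$, and $P(M)\subseteq Q(M)\subseteq R(M)$ shows that $P\leq Q$ and $Q\leq R$ imply $P\leq R$. So the only axiom for ``partial order'' needing an argument is antisymmetry.

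For antisymmetry I would lean entirely on Lemma~\ref{lem5}(i). Suppose $P\leq Q$ and $Q\leq P$. Applying the implication (a)$\Rightarrow$(c) of Lemma~\ref{lem5}(i) to $P\leq Q$ gives $QP=P$, and applying (a)$\Rightarrow$(b) to $Q\leq P$ (that is, Lemma~\ref{lem5}(i) with the roles of $P$ and $Q$ interchanged) gives $QP=Q$. Hence $P=QP=Q$. (Equivalently, $P\leq Q$ and $Q\leq P$ first force $P(M)=Q(M)$, and then the same short computation applies.)

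For the bounds, recall that $\mathbf0,\mathbf I\in\Pro(\mathbf M)$ was already noted. Since every projection $P$ is linear, $P\vec0=P(0\vec x)=0(P\vec x)=\vec0$ by the semimodule axiom $0\vec x=\vec0$, so $\vec0\in P(M)$ and therefore $\mathbf0(M)=\{\vec0\}\subseteq P(M)$, i.e.\ $\mathbf0\leq P$. On the other hand $P(M)\subseteq M=\mathbf I(M)$ trivially, i.e.\ $P\leq\mathbf I$. Combining all of this, $(\Pro(\mathbf M),\leq,\mathbf0,\mathbf I)$ is a bounded poset.

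I do not expect a genuine obstacle here: the whole content of the statement is essentially antisymmetry of $\leq$, and that has been pre-packaged into Lemma~\ref{lem5}(i), so the argument is short bookkeeping. The only points worth a second glance are that the image-inclusion relation is well defined on $\Pro(\mathbf M)$ and that $\mathbf0,\mathbf I$ really are projections, both of which are routine.
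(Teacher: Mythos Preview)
Your proof is correct and follows essentially the same line as the paper: both hinge on Lemma~\ref{lem5}(i) for antisymmetry, which is the only nontrivial axiom. The sole stylistic difference is that the paper also routes reflexivity and transitivity through the algebraic characterization $PQ=P$ (using $P^2=P$ for reflexivity and $PR=(PQ)R=P(QR)=PQ=P$ for transitivity), whereas you verify these two directly from image inclusion---either way works.
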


\begin{proof}
We apply Lemma~\ref{lem5}. Let $P,Q,R\in\Pro(\mathbf M)$. Since $P^2=P$ we have $P\leq P$, if $P\leq Q\leq P$ then $P=PQ=Q$, and if $P\leq Q\leq R$ then
\[
PR=(PQ)R=P(QR)=PQ=P,
\]
i.e.\ $P\leq R$. Thus, $(\Pro(\mathbf M),\leq)$ is a poset. Clearly, $\mathbf0\leq P\leq\mathbf I$.
\end{proof}

It is elementary to check the following Proposition.

\begin{proposition}
The mapping $P\mapsto P(M)$ is a order homomorphism from the bounded poset $(\Pro(\mathbf M),\leq,\mathbf0,\mathbf I)$ to the bounded poset $(L(\mathbf M),\subseteq,\{\vec0\},M)$.
\end{proposition}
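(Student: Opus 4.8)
The plan is to verify directly that $P \mapsto P(M)$ is an order-preserving map between the two bounded posets, which amounts to three things: that $P(M)$ is indeed a subsemimodule of $\mathbf M$ for every projection $P$ (so that the map is well-defined into $L(\mathbf M)$), that it respects the order, and that it sends the bounds to the bounds.

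First I would check well-definedness. Since $P$ is linear, $P(M)$ is closed under $+$ (as $P\vec x + P\vec y = P(\vec x + \vec y)$) and under scalar multiplication (as $a(P\vec x) = P(a\vec x)$), and it contains $P\vec 0 = \vec 0$; hence $P(M) \in L(\mathbf M)$. Next, monotonicity is immediate from the very definition of $\leq$ on $\Pro(\mathbf M)$: we defined $P \leq Q$ to mean $P(M) \subseteq Q(M)$, so $P \leq Q$ gives $P(M) \subseteq Q(M)$ with nothing to prove. Finally, $\mathbf 0(M) = \{\vec 0\}$, the bottom of $\mathbf L(\mathbf M)$, and $\mathbf I(M) = M$, the top; so the bounds are preserved.

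There is really no obstacle here — the statement is essentially a bookkeeping check, and the word ``elementary'' in the preamble is accurate. The only mild subtlety worth a sentence is that ``order homomorphism'' for bounded posets is being read as an order-preserving map that also preserves the two nullary operations $\vec 0$ and $M$; one should not expect it to preserve suprema or infima (even where they exist), and indeed the map is typically far from a lattice embedding. I would phrase the proof in three short clauses mirroring the three points above and leave it at that.

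\begin{proof}
Let $P, Q \in \Pro(\mathbf M)$. Since $P$ is a linear mapping, for $\vec x, \vec y \in M$ and $a \in S$ we have $P\vec x + P\vec y = P(\vec x + \vec y) \in P(M)$ and $a(P\vec x) = P(a\vec x) \in P(M)$, and moreover $\vec 0 = P\vec 0 \in P(M)$; hence $P(M) \in L(\mathbf M)$, so the mapping is well-defined. If $P \leq Q$ then, by definition of the order on $\Pro(\mathbf M)$, $P(M) \subseteq Q(M)$, so the mapping is order-preserving. Finally $\mathbf 0(M) = \{\vec 0\}$ and $\mathbf I(M) = M$, so the bounds are preserved as well.
\end{proof}
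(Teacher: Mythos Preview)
Your proof is correct and matches the paper's treatment: the paper gives no proof at all, merely stating that the proposition is elementary to check, and your three-clause verification (well-definedness via linearity, order preservation by the very definition of $\leq$ on $\Pro(\mathbf M)$, and preservation of bounds) is exactly the elementary check the paper has in mind.
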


\section{Modules over rings}

In this section we will investigate modules over unitary commutative rings instead of semimodules over commutative semirings. Of course, every module $\mathbf M$ over a unitary commutative ring $\mathbf S$ is a semimodule but now $(M,+)$ is a commutative group. It means that on $M$ there is also a binary operation $-$ of subtraction. This enables us to reach stronger results than those above for semimodules.

In the sequel we assume that the semimodule $\mathbf M$ over the commutative semiring $\mathbf S$ is a module over the unitary commutative ring $\mathbf S$, i.e.\ $(M,+)$ is a commutative group.

In this section let $L(\mathbf M)$, $L_c(\mathbf M)$ and $L_s(\mathbf M)$ denote the set of all submodules, closed submodules and splitting submodules of $\mathbf M$, respectively.

The following result is well-known:

For every module $\mathbf M$, the lattice $\mathbf L(\mathbf M)$ is modular contrary to the case of semimodules, see Example~\ref{ex1}.

\begin{definition}
Let $P,Q\in\Pro(\mathbf M)$. We define $(P-Q)\vec x:=P\vec x-Q\vec x$ for all $\vec x\in M$. Further, $P':=\mathbf I-P$ and $P\perp Q$ if $P\leq Q'$.
\end{definition}

\begin{lemma}\label{lem4}
Let $P,Q\in\Pro(\mathbf M)$. Then $P'\in\Pro(\mathbf M)$, and $P\perp Q\Leftrightarrow PQ=\mathbf0\Leftrightarrow QP=\mathbf0$.
\end{lemma}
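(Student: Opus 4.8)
The plan is to verify each of the three claims by direct computation, using the defining property $(P\vec x)\vec y = \vec x(P\vec y)$ of projections together with the fact that $(M,+)$ is now an abelian group, so subtraction is available. First I would show $P' = \mathbf I - P \in \Pro(\mathbf M)$. Linearity of $P'$ is immediate from linearity of $P$ and of $\mathbf I$. For idempotency, expand $(P')^2 = (\mathbf I - P)(\mathbf I - P) = \mathbf I - P - P + P^2 = \mathbf I - P - P + P = \mathbf I - P = P'$, where I use $P^2 = P$ and the group structure to cancel. For the symmetry condition, compute $((\mathbf I - P)\vec x)\vec y = \vec x\vec y - (P\vec x)\vec y = \vec x\vec y - \vec x(P\vec y) = \vec x((\mathbf I - P)\vec y)$, again using bilinearity of the inner product and the subtraction on $M$. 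Hence $P' \in \Pro(\mathbf M)$.

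Next I would establish the chain of equivalences $P \perp Q \Leftrightarrow PQ = \mathbf0 \Leftrightarrow QP = \mathbf0$. By definition $P \perp Q$ means $P \leq Q' = \mathbf I - Q$. By Lemma~\ref{lem5}(i), applied to the pair $P$ and $Q' \in \Pro(\mathbf M)$, the relation $P \leq Q'$ is equivalent to $PQ' = P$, i.e.\ to $P(\mathbf I - Q) = P$, i.e.\ to $P - PQ = P$, which by cancellation in the group $(M,+)$ is equivalent to $PQ = \mathbf0$. This gives $P \perp Q \Leftrightarrow PQ = \mathbf0$. The equivalence $PQ = \mathbf0 \Leftrightarrow QP = \mathbf0$ then follows by symmetry: if $PQ = \mathbf0$ then for all $\vec x,\vec y \in M$ we have $(QP\vec x)\vec y = (P\vec x)(Q\vec y) = \vec x(PQ\vec y) = \vec x \vec 0 = 0$, so by Lemma~\ref{lem2}(ii) applied coordinatewise (or rather by Lemma~\ref{lem2}(i)) we get $QP\vec x = \vec 0$ for all $\vec x$, i.e.\ $QP = \mathbf0$; the reverse implication is identical with the roles of $P$ and $Q$ interchanged.

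I do not expect any genuine obstacle here: every step is a routine manipulation. The only point requiring a little care is making sure that the cancellation steps (such as passing from $P - PQ = P$ to $PQ = \mathbf0$) really do use the group structure of $(M,+)$ and would fail for a mere monoid — this is precisely the place where the hypothesis that $\mathbf M$ is a module rather than a semimodule is needed, and it is worth flagging that the equivalence with $QP = \mathbf0$ could alternatively be read off by applying the already-proven equivalence to $Q \perp P$ once one checks that $\perp$ is symmetric, which itself reduces to $PQ = \mathbf0 \Leftrightarrow QP = \mathbf0$, so the self-contained computation via Lemma~\ref{lem2} is the cleaner route.
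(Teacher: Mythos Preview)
Your argument is correct and matches the paper's proof almost exactly: the verification that $P'\in\Pro(\mathbf M)$ is identical, and the first equivalence $P\perp Q\Leftrightarrow PQ=\mathbf0$ is obtained in the same way via Lemma~\ref{lem5}(i). The one minor difference is in the step $PQ=\mathbf0\Leftrightarrow QP=\mathbf0$: the paper simply invokes the full chain (a)$\Leftrightarrow$(b)$\Leftrightarrow$(c) of Lemma~\ref{lem5}(i) applied to $P$ and $Q'$, obtaining $PQ'=P\Leftrightarrow Q'P=P$ and hence $PQ=\mathbf0\Leftrightarrow QP=\mathbf0$ by cancellation, whereas you re-derive this equivalence by a direct inner-product computation and Lemma~\ref{lem2}. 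Both routes are valid; the paper's is slightly more economical since Lemma~\ref{lem5}(i) already contains the symmetry you need.
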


\begin{proof}
Let $\vec a,\vec b\in M$. Clearly, $P'$ is a linear mapping from $\mathbf M$ to itself,
\begin{align*}
          (P')^2 & =(\mathbf I-P)(\mathbf I-P)=\mathbf I-P-P+P^2=\mathbf I-P=P', \\
(P'\vec a)\vec b & =((\mathbf I-P)\vec a)\vec b=(\vec a-P\vec a)\vec b=\vec a\vec b-(P\vec a)\vec b=\vec a\vec b-\vec a(P\vec b)=\vec a(\vec b-P\vec b)= \\
                 & =\vec a((\mathbf I-P)\vec b)=\vec a(P'\vec b)
\end{align*}
showing $P'\in\Pro(\mathbf M)$. Finally,
\[
P\perp Q\Leftrightarrow P\leq Q'\Leftrightarrow PQ'=P\Leftrightarrow Q'P=P.
\]
\end{proof}

By Theorem~\ref{th3}, $(\Pro(\mathbf M),\leq,\mathbf0,\mathbf I)$ is a bounded poset. Now we can prove for modules a bit more.

\begin{lemma}\label{lem1}
Let $\mathbf M$ be a module. Then $\BPr(\mathbf M):=(\Pro(\mathbf M),\leq,{}',\mathbf0,\mathbf I)$ is a bounded poset with an antitone involution.
\end{lemma}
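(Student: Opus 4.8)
The plan is to verify that $\BPr(\mathbf M) = (\Pro(\mathbf M), \leq, {}', \mathbf 0, \mathbf I)$ has all the required properties. By Theorem~\ref{th3} we already know $(\Pro(\mathbf M), \leq, \mathbf 0, \mathbf I)$ is a bounded poset, and by Lemma~\ref{lem4} we know $P' \in \Pro(\mathbf M)$ whenever $P \in \Pro(\mathbf M)$, so ${}'$ is a well-defined unary operation on $\Pro(\mathbf M)$. It remains to show that ${}'$ is an antitone involution, i.e.\ that $P'' = P$ for all $P$ and that $P \leq Q$ implies $Q' \leq P'$.

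First I would check the involution property: $P'' = (\mathbf I - P)' = \mathbf I - (\mathbf I - P) = P$, where the cancellation is legitimate because $(M,+)$ is a commutative group. This is immediate.

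Next I would prove antitonicity. Suppose $P \leq Q$. By Lemma~\ref{lem5}(i) this is equivalent to $PQ = P = QP$. I want $Q' \leq P'$, which by Lemma~\ref{lem5}(i) again is equivalent to $Q'P' = Q'$ (or equivalently $P'Q' = Q'$). Using $Q' = \mathbf I - Q$ and $P' = \mathbf I - P$, I would compute
\[
Q'P' = (\mathbf I - Q)(\mathbf I - P) = \mathbf I - P - Q + QP = \mathbf I - P - Q + P = \mathbf I - Q = Q',
\]
using $QP = P$ and then cancelling $P$ via the group structure. Hence $Q' \leq P'$, so ${}'$ is antitone. Combining this with the involution property shows ${}'$ is an antitone involution on the bounded poset $(\Pro(\mathbf M), \leq, \mathbf 0, \mathbf I)$, which is exactly the assertion.

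There is no serious obstacle here; the only point requiring the module (as opposed to semimodule) hypothesis is the availability of subtraction, which is what makes $P' = \mathbf I - P$ meaningful and allows the cancellations above — and this is precisely the extra structure flagged in the paragraph preceding the lemma. One should also double-check that $\mathbf 0' = \mathbf I$ and $\mathbf I' = \mathbf 0$, which is immediate from the definition, so the bounds are respected by the involution.
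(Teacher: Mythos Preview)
Your proof is correct and follows essentially the same approach as the paper: you compute $Q'P' = (\mathbf I - Q)(\mathbf I - P) = \mathbf I - P - Q + QP = \mathbf I - Q = Q'$ using $QP = P$ from Lemma~\ref{lem5} to get antitonicity, and verify $P'' = \mathbf I - (\mathbf I - P) = P$ for the involution, exactly as in the paper's proof. Your version is slightly more explicit in invoking Theorem~\ref{th3} and Lemma~\ref{lem4} up front, but the substance is identical.
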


\begin{proof}
Let $P,Q\in\Pro(\mathbf M)$. If $P\leq Q$ then
\[
Q'P'=(\mathbf I-Q)(\mathbf I-P)=\mathbf I-P-Q+QP=\mathbf I-Q=Q'
\]
according to Lemma~\ref{lem5}, i.e.\ $Q'\leq P'$. Finally, $P''=\mathbf I-(\mathbf I-P)=P$.
\end{proof}

For a splitting submodule $U$ of a module $\mathbf M$ we can show now that every element of $M$ can be uniquely decomposed into a sum of two elements, one belonging to $U$ and the other to $U^\perp$.

\begin{lemma}\label{lem7}
Let $U\in L_s(\mathbf M)$ and $\vec a\in M$. Then there exist unique $b\in U$ and $c\in U^\perp$ with $\vec b+\vec c=\vec a$.
\end{lemma}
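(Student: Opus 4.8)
The plan is to use the defining properties of a splitting submodule directly. Since $U$ is splitting, $U+U^\perp=M$, so at least one decomposition $\vec a=\vec b+\vec c$ with $\vec b\in U$ and $\vec c\in U^\perp$ exists; the entire content of the lemma is the uniqueness, and this is precisely where we exploit that $(M,+)$ is a group rather than merely a monoid.

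First I would suppose $\vec a=\vec b_1+\vec c_1=\vec b_2+\vec c_2$ with $\vec b_1,\vec b_2\in U$ and $\vec c_1,\vec c_2\in U^\perp$. Rearranging using subtraction gives $\vec b_1-\vec b_2=\vec c_2-\vec c_1$. The left-hand side lies in $U$ (since $U$ is a submodule, hence closed under subtraction) and the right-hand side lies in $U^\perp$ (likewise a submodule). Therefore this common element belongs to $U\cap U^\perp=\{\vec0\}$, whence $\vec b_1=\vec b_2$ and $\vec c_1=\vec c_2$.

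There is no real obstacle here; the only point worth care is that both $U$ and $U^\perp$ are genuine submodules of the module $\mathbf M$ (so that differences stay inside them), which is immediate since $U^\perp\in L(\mathbf M)$ by Proposition~\ref{prop1} and every submodule of a module is closed under the subtraction inherited from the group $(M,+)$. I would also note explicitly that this uniqueness is exactly the feature that fails for general semimodules, where the absence of subtraction prevents the cancellation step, which is why the lemma is stated only in the module setting.
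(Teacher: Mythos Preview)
Your proof is correct and follows essentially the same approach as the paper: existence from $U+U^\perp=M$, and uniqueness by subtracting two decompositions to obtain an element of $U\cap U^\perp=\{\vec0\}$. The paper's argument is the same one-line computation, just without the surrounding commentary.
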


\begin{proof}
Because of $M=U+U^\perp$ there exist $\vec b\in U$ and $\vec c\in U^\perp$ with $\vec b+\vec c=\vec a$. If $\vec d\in U$, $\vec e\in U^\perp$ and $\vec d+\vec e=\vec a$ then $\vec b-\vec d=\vec e-\vec c\in U\cap U^\perp=\{\vec0\}$ and hence $(\vec b,\vec c)=(\vec d,\vec e)$.
\end{proof}

If $U\in L_s(\mathbf M)$ then let $P_U$ denote the unique mapping from $M$ to $M$ with $P_U(\vec x)\in U$ and $\vec x-P_U(\vec x)\in U^\perp$ for all $\vec x\in M$. In the notation of Lemma~\ref{lem7}, $P_U(\vec a)=\vec b$ and $\vec a-P_U(\vec a)=\vec c$.

Now we can show that the poset of splitting submodules of $\mathbf M$ is isomorphic to the poset of its projections.

\begin{theorem}\label{th4}
The mappings $U\mapsto P_U$ and $P\mapsto P(M)$ are mutually inverse isomorphisms between $\mathbf L_s(\mathbf M)$ and $\BPr(\mathbf M)$.
\end{theorem}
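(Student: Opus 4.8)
The plan is to establish four things: that $U \mapsto P_U$ maps into $\Pro(\mathbf M)$, that $P \mapsto P(M)$ maps $\Pro(\mathbf M)$ into $L_s(\mathbf M)$, that the two assignments are mutually inverse, and that both are order-preserving. First I would verify that $P_U$ is a projection whenever $U \in L_s(\mathbf M)$. Linearity of $P_U$ follows from the uniqueness in Lemma~\ref{lem7}: if $P_U(\vec x) = \vec b$, $P_U(\vec y) = \vec b_1$ with $\vec x - \vec b \in U^\perp$, $\vec y - \vec b_1 \in U^\perp$, then $\vec b + \vec b_1 \in U$ and $(\vec x + \vec y) - (\vec b + \vec b_1) \in U^\perp$, so $P_U(\vec x + \vec y) = \vec b + \vec b_1$; similarly for scalar multiples. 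Idempotency is immediate since $P_U(\vec x) \in U$ and $P_U(\vec x) - P_U(\vec x) = \vec 0 \in U^\perp$ force $P_U(P_U(\vec x)) = P_U(\vec x)$. For the self-adjointness condition $(P_U\vec x)\vec y = \vec x(P_U\vec y)$, write $\vec x = P_U\vec x + (\vec x - P_U\vec x)$ and $\vec y = P_U\vec y + (\vec y - P_U\vec y)$; expanding $(P_U\vec x)\vec y$ bilinearly and using $P_U\vec x \in U$, $\vec y - P_U\vec y \in U^\perp$ kills the cross term, leaving $(P_U\vec x)(P_U\vec y)$, and the symmetric computation gives the same for $\vec x(P_U\vec y)$.

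Next I would show $P(M) \in L_s(\mathbf M)$ for $P \in \Pro(\mathbf M)$. Since $P$ is linear, $P(M)$ is a submodule. For every $\vec x \in M$ we have $\vec x = P\vec x + (\mathbf I - P)\vec x = P\vec x + P'\vec x$, and by Lemma~\ref{lem4} $P' \in \Pro(\mathbf M)$, so $M = P(M) + P'(M)$. The key identity is that $P'(M) = P(M)^\perp$: if $\vec y \in P(M)$, say $\vec y = P\vec z$, and $\vec w \in M$, then $(P'\vec w)(P\vec z) = \vec w(P'P\vec z) = \vec w((P - P^2)\vec z) = \vec w\vec 0 = 0$ using the self-adjointness of $P'$ (or of $P$), so $P'(M) \subseteq P(M)^\perp$; conversely if $\vec u \in P(M)^\perp$ then in particular $\vec u \perp P\vec u$, and $\vec u = P\vec u + P'\vec u$ gives $\vec u\vec u = (P\vec u)\vec u + (P'\vec u)\vec u$... here one must be slightly careful and instead argue directly: for $\vec u \in P(M)^\perp$ one has $(P\vec u)(P\vec u) = \vec u(P^2\vec u) = \vec u(P\vec u) = 0$ since $P\vec u \in P(M)$, but this only gives $P\vec u \perp P\vec u$, not $P\vec u = \vec 0$. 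This is the point I expect to be the main obstacle.

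To resolve it I would use the self-adjointness more carefully: for $\vec u \in P(M)^\perp$ and arbitrary $\vec v \in M$, $(P\vec u)\vec v = \vec u(P\vec v) = 0$ because $P\vec v \in P(M)$ and $\vec u \in P(M)^\perp$; hence $P\vec u \perp \vec v$ for all $\vec v \in M$, so by Lemma~\ref{lem2}(ii) $P\vec u = \vec 0$, i.e.\ $\vec u = P'\vec u \in P'(M)$. Thus $P(M)^\perp = P'(M)$. Then $P(M) + P(M)^\perp = P(M) + P'(M) = M$, and if $\vec w \in P(M) \cap P(M)^\perp$ the same argument with $\vec u = \vec w$ gives $P\vec w = \vec 0$, but $\vec w \in P(M)$ means $\vec w = P\vec w' $ for some $\vec w'$, so $\vec w = P\vec w = P^2\vec w' = P\vec w' = \vec w$, which does not immediately vanish; instead use $\vec w \in P(M)$ so $P\vec w = \vec w$ (as $P$ is idempotent on its image), combined with $P\vec w = \vec 0$, giving $\vec w = \vec 0$. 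Hence $P(M) \in L_s(\mathbf M)$.

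Finally I would check the two maps are mutually inverse and order-preserving. For $U \in L_s(\mathbf M)$, $P_U(M) = U$: the inclusion $P_U(M) \subseteq U$ is by definition, and for $\vec b \in U$ we have $\vec b - \vec b = \vec 0 \in U^\perp$ so $P_U(\vec b) = \vec b$, giving $U \subseteq P_U(M)$. For $P \in \Pro(\mathbf M)$, $P_{P(M)} = P$: for each $\vec x$, $P\vec x \in P(M)$ and $\vec x - P\vec x = P'\vec x \in P'(M) = P(M)^\perp$, so by the uniqueness in Lemma~\ref{lem7}, $P_{P(M)}(\vec x) = P\vec x$. For monotonicity, if $P \leq Q$ then $P(M) \subseteq Q(M)$ by the very definition of $\leq$ on $\Pro(\mathbf M)$, so $P \mapsto P(M)$ is order-preserving; conversely if $U \subseteq W$ in $L_s(\mathbf M)$ then $P_U(M) = U \subseteq W = P_W(M)$, which is exactly $P_U \leq P_W$. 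Since each map is a bijection (being two-sided inverses) and both are order-preserving, they are mutually inverse order isomorphisms between $\mathbf L_s(\mathbf M)$ and $\BPr(\mathbf M)$, and I would close by remarking that they also preserve the distinguished elements, $P_{\{\vec 0\}} = \mathbf 0$ and $P_M = \mathbf I$.
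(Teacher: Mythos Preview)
Your proof is correct and follows essentially the same route as the paper: verify $P_U\in\Pro(\mathbf M)$ via linearity, idempotency and the self-adjointness computation $(P_U\vec x)\vec y=(P_U\vec x)(P_U\vec y)=\vec x(P_U\vec y)$; show $P(M)\in L_s(\mathbf M)$ by proving $\vec x-P\vec x\in P(M)^\perp$ and using Lemma~\ref{lem2} to force $P\vec u=\vec0$ for $\vec u\in P(M)^\perp$; then check the inverse and monotonicity conditions directly. The only omission is that $\mathbf L_s(\mathbf M)$ and $\BPr(\mathbf M)$ carry the unary operations $^\perp$ and $'$, so you should also record $P_{U^\perp}=(P_U)'$, which follows immediately from your identity $P'(M)=P(M)^\perp$ together with bijectivity.
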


\begin{proof}
Let $U,W\in P_s(\mathbf M)$, $P,Q\in\Pro(\mathbf M)$ and $\vec a,\vec b\in M$. Obviously, $P_U$ is a linear mapping from $\mathbf M$ to itself and $(P_U)^2=P_U$. Moreover,
\begin{align*}
(P_U\vec a)\vec b & =(P_U\vec a)((\vec b-P_U\vec b)+P_U\vec b)=(P_U\vec a)(\vec b-P_U\vec b)+(P_U\vec a)(P_U\vec b)=(P_U\vec a)(P_U\vec b)= \\
& =(\vec a-P_U\vec a)(P_U\vec b)+(P_U\vec a)(P_U\vec b)=((\vec a-P_U\vec a)+P_U\vec a)(P_U\vec b)=\vec a(P_U\vec b)
\end{align*}
showing $P_U\in\Pro(\mathbf M)$. Now $(\vec a-P\vec a)(P\vec b)=(P(\vec a-P\vec a))\vec b=(P\vec a-P\vec a)\vec b=0$ and hence $\vec a-P\vec a\in(P(M))^\perp$. This shows $P(M)+(P(M))^\perp=M$. If $P\vec a\in(P(M))^\perp$ then $(P\vec a)\vec b=(P^2\vec a)\vec b=(P\vec a)(P\vec b)=0=\vec0\vec b$ and hence $P\vec a=\vec0$ showing $P(M)\cap(P(M))^\perp=\{\vec0\}$. Hence $P(M)\in L_s(\mathbf M)$. Obviously, $P_U(M)=U$. Since $\vec a-P\vec a\in(P(M))^\perp$, we have $P=P_{P(M)}$. If $U\subseteq W$ then $P_U(M)=U\subseteq W=P_W(M)$, i.e.\ $P_U\leq P_W$. If, conversely, $P\leq Q$ then $P(M)\subseteq Q(M)$. Of course, $P_{\{\vec0\}}=\mathbf0$, $P_M=\mathbf I$ and $P_{U^\perp}=\mathbf I-P_U=(P_U)'$.
\end{proof}

The next lemma shows that the supremum of two commuting projections always exists.

\begin{lemma}\label{lem6}
Let $P,Q\in\Pro(\mathbf M)$ and assume $PQ=QP$. Then $P\vee Q=P+Q-PQ$.
\end{lemma}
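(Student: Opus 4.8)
The plan is to show directly that $R := P+Q-PQ$ is a projection, that it is an upper bound of $\{P,Q\}$, and that it is the least such upper bound; Lemma~\ref{lem5}(ii) guarantees $P\wedge Q=PQ$ exists, and here we produce the join explicitly. First I would verify $R\in\Pro(\mathbf M)$. Linearity is immediate since $P$, $Q$ and $PQ$ are linear and addition and subtraction of linear maps are linear. For idempotency, expand $R^2=(P+Q-PQ)^2$, using $P^2=P$, $Q^2=Q$, $PQ=QP$ and $(PQ)^2=PQPQ=P^2Q^2=PQ$; a routine bookkeeping of the nine terms collapses to $R^2=P+Q-PQ=R$. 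For the self-adjointness condition $(R\vec x)\vec y=\vec x(R\vec y)$, note that it holds for $P$ and $Q$ by hypothesis and for $PQ$ by the computation in the proof of Lemma~\ref{lem5}(ii), where $PQ\in\Pro(\mathbf M)$ was established precisely under the assumption $PQ=QP$; since the inner product is additive in each argument and the condition is preserved under sums and differences, it passes to $R=P+Q-PQ$.

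Next I would check that $R$ is an upper bound of $P$ and $Q$ in $(\Pro(\mathbf M),\leq)$. By Lemma~\ref{lem5}(i) it suffices to show $RP=P$ and $RQ=Q$ (equivalently the left-hand versions). Compute $RP=(P+Q-PQ)P=P^2+QP-PQP=P+QP-PQ\cdot P$; using $QP=PQ$ and $PQP=P(QP)=P(PQ)=P^2Q=PQ$, this gives $RP=P+PQ-PQ=P$, so $P\leq R$. The computation for $QR=Q$, i.e.\ $RQ = (P+Q-PQ)Q = PQ+Q-PQ^2 = PQ+Q-PQ = Q$, is symmetric, so $Q\leq R$.

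Finally I would show minimality: if $S\in\Pro(\mathbf M)$ with $P\leq S$ and $Q\leq S$, then $R\leq S$, i.e.\ (by Lemma~\ref{lem5}(i)) $SR=R$. Using $SP=P$ and $SQ=Q$ from Lemma~\ref{lem5}(i), compute $SR=S(P+Q-PQ)=SP+SQ-SPQ=P+Q-(SP)Q=P+Q-PQ=R$. Hence $R$ is the least upper bound of $\{P,Q\}$, so $P\vee Q=R=P+Q-PQ$. The only place requiring a little care is confirming that $R$ actually lies in $\Pro(\mathbf M)$ — in particular that the self-adjointness condition survives the combination — but this is handled by additivity of the inner product together with the fact, already recorded in Lemma~\ref{lem5}(ii), that $PQ$ is a projection when $P$ and $Q$ commute; everything else is a short algebraic manipulation using idempotency and commutativity.
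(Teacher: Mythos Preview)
Your proof is correct, but the paper takes a different, more structural route. Instead of verifying directly that $R:=P+Q-PQ$ is a projection, an upper bound, and the least upper bound, the paper exploits the antitone involution $'$ established in Lemma~\ref{lem1}: it first checks that $P'Q'=Q'P'$ (an immediate expansion), then invokes Lemma~\ref{lem5}(ii) to get $P'\wedge Q'=P'Q'$, and finally applies De~Morgan duality to conclude
\[
P\vee Q=(P'\wedge Q')'=(P'Q')'=\mathbf I-(\mathbf I-P-Q+PQ)=P+Q-PQ.
\]
The paper's approach is shorter and more conceptual, since it reduces the join to the already-computed meet via the involution; in particular it never has to check separately that $P+Q-PQ$ is idempotent, self-adjoint, an upper bound, and minimal. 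Your approach, on the other hand, is entirely self-contained and does not rely on Lemma~\ref{lem1} at all---it would work even before the involution structure on $\Pro(\mathbf M)$ has been set up---at the cost of a few more lines of direct computation.
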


\begin{proof}
We have
\[
P'Q'=(\mathbf I-P)(\mathbf I-Q)=\mathbf I-Q-P+PQ=\mathbf I-P-Q+QP=(\mathbf I-Q)(\mathbf I-P)=Q'P'
\]
and hence according to Lemma~\ref{lem5}
\[
P\vee Q=(P'\wedge Q')'=(P'Q')'=\mathbf I-(\mathbf I-P-Q+PQ)=P+Q-PQ
\]
according to Lemma~\ref{lem5}.
\end{proof}

\begin{corollary}
If $P,Q\in\Pro(\mathbf M)$ and $P\perp Q$ then $P\wedge Q=\mathbf0$ and $P\vee Q=P+Q$.
\end{corollary}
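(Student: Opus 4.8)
The plan is to derive both assertions directly from Lemma~\ref{lem6} and Lemma~\ref{lem4}, treating $P\perp Q$ as the special case where the two projections annihilate each other. First I would recall from Lemma~\ref{lem4} that $P\perp Q$ is equivalent to $PQ=\mathbf0$ and to $QP=\mathbf0$. In particular $PQ=QP$ (both being $\mathbf0$), so the hypothesis of Lemma~\ref{lem6} is met and we are entitled to use the formulas computed there together with part (ii) of Lemma~\ref{lem5}.

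For the infimum, Lemma~\ref{lem5}(ii) gives $P\wedge Q=PQ$ whenever $PQ=QP$; substituting $PQ=\mathbf0$ yields $P\wedge Q=\mathbf0$ at once. For the supremum, Lemma~\ref{lem6} gives $P\vee Q=P+Q-PQ$ under the same commuting hypothesis; substituting $PQ=\mathbf0$ leaves $P\vee Q=P+Q-\mathbf0=P+Q$. So the proof is essentially a two-line specialization: invoke Lemma~\ref{lem4} to turn $P\perp Q$ into $PQ=QP=\mathbf0$, then plug into the two cited lemmas.

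I do not anticipate a genuine obstacle here; the only point needing a word of care is making explicit that $P+Q$ is again a projection (so that the equation $P\vee Q=P+Q$ is meaningful in $\BPr(\mathbf M)$), but this already follows from Lemma~\ref{lem6}, whose right-hand side $P+Q-PQ$ is asserted to be the join in $\Pro(\mathbf M)$ and hence lies in $\Pro(\mathbf M)$; with $PQ=\mathbf0$ this element is exactly $P+Q$. Thus the corollary is immediate and I would simply write: ``By Lemma~\ref{lem4}, $P\perp Q$ implies $PQ=QP=\mathbf0$. Hence by Lemma~\ref{lem5}, $P\wedge Q=PQ=\mathbf0$, and by Lemma~\ref{lem6}, $P\vee Q=P+Q-PQ=P+Q$.''
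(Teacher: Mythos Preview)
Your proposal is correct and matches the paper's own proof exactly: the paper simply writes ``This follows from Lemmas~\ref{lem5}, \ref{lem4} and \ref{lem6},'' and your argument spells out precisely that derivation.
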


\begin{proof}
This follows from Lemmas~\ref{lem5}, \ref{lem4} and \ref{lem6}.
\end{proof}

Recall from \cite{Be} that an {\em orthomodular poset} is a bounded poset $(P,\leq,{}',0,1)$ with an antitone involution such that for all $x,y\in P$:
\[
x\vee y\text{ exists if }x\leq y,\text{ and if }x\leq y\text{ then }y=x\vee(y\wedge x').
\]
The notion of an orthomodular poset is well-defined: If $x\leq y$ then $x\vee y'$ exists and hence $x'\wedge y$ exists, too. Moreover, $x'\wedge y\leq x'$ and hence $(x'\wedge y)\vee x$ exists.

Our final result shows that the splitting submodules of $\mathbf M$ form an orthomodular poset.

\begin{theorem}\label{th5}
Let $\mathbf M$ be a module. Then $\BPr(\mathbf M)$ is an orthomodular poset, $\mathbf L_s(\mathbf M)$ is isomorphic to $\BPr(\mathbf M)$, and $U\vee W=U+W$ in $\mathbf L_s(\mathbf M)$ for every $U,W\in L_s(\mathbf M)$ with $U\perp W$ {\rm(}i.e.\ $U\subseteq W^\perp${\rm)}.
\end{theorem}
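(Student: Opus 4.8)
The plan is to obtain all three assertions from material already established, so that the only genuinely new computation is the verification of the orthomodular law in $\BPr(\mathbf M)$.

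The second assertion is literally the content of Theorem~\ref{th4}: the maps $U\mapsto P_U$ and $P\mapsto P(M)$ are mutually inverse isomorphisms between $\mathbf L_s(\mathbf M)$ and $\BPr(\mathbf M)$, carrying $\{\vec0\}$ to $\mathbf0$, $M$ to $\mathbf I$ and ${}^\perp$ to ${}'$. Since an order isomorphism commuting with the antitone involution preserves the bounds and every supremum and infimum that happens to exist, it will transport the orthomodular-poset structure; hence it suffices to establish the first assertion.

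For the first assertion, Lemma~\ref{lem1} already supplies that $\BPr(\mathbf M)$ is a bounded poset with an antitone involution ${}'$, so only the two orthomodular-poset axioms remain. If $P\perp Q$, then by Lemma~\ref{lem4} $PQ=QP=\mathbf0$, so $P$ and $Q$ commute and the Corollary to Lemma~\ref{lem6} gives $P\vee Q=P+Q$; in particular this supremum exists. For the orthomodular law, assume $P\le Q$, so $PQ=QP=P$ by Lemma~\ref{lem5}. Then $QP'=Q-QP=Q-P=Q-PQ=P'Q$, so $Q$ and $P'$ commute, whence by Lemma~\ref{lem5}(ii) the infimum exists and $Q\wedge P'=QP'=Q-P$ (which is therefore a projection). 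Since $P(Q-P)=PQ-P=\mathbf0=(Q-P)P$, we have $P\perp(Q-P)$, and the Corollary to Lemma~\ref{lem6} yields $P\vee(Q\wedge P')=P\vee(Q-P)=P+(Q-P)=Q$. This is the orthomodular law, and completes the first assertion.

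For the third assertion, take $U,W\in L_s(\mathbf M)$ with $U\subseteq W^\perp$. Under the isomorphism of Theorem~\ref{th4}, $W^\perp$ corresponds to $(P_W)'$, so $U\subseteq W^\perp$ becomes $P_U\perp P_W$, and the Corollary to Lemma~\ref{lem6} gives $P_U\vee P_W=P_U+P_W$; transporting back, $U\vee W$ exists in $\mathbf L_s(\mathbf M)$ and equals $(P_U+P_W)(M)$. It then remains to identify $(P_U+P_W)(M)$ with $U+W$: one inclusion is immediate from $(P_U+P_W)\vec x=P_U\vec x+P_W\vec x$, and for the reverse I would use that $U\subseteq W^\perp$ (equivalently $W\subseteq U^\perp$, by Proposition~\ref{prop1}) forces $P_W$ to vanish on $U$ and $P_U$ to vanish on $W$, so that $(P_U+P_W)(\vec u+\vec w)=\vec u+\vec w$ for $\vec u\in U$, $\vec w\in W$. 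The main obstacle is the orthomodular-law computation in the previous paragraph; the one point there that needs care is that $Q-P$ really is a projection, which is exactly why the argument is routed through $Q\wedge P'=QP'$ and Lemma~\ref{lem5}(ii) rather than manipulating $Q-P$ directly. Everything else is bookkeeping around Theorem~\ref{th4} and the Corollary to Lemma~\ref{lem6}.
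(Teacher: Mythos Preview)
Your proof is correct and follows essentially the same route as the paper: invoke Lemma~\ref{lem1} and the Corollary to Lemma~\ref{lem6} for the poset structure and orthogonal joins, verify the orthomodular identity, and transport everything through Theorem~\ref{th4}. The only cosmetic difference is in the orthomodular step: the paper obtains $P'\wedge Q$ by duality as $(P\vee Q')'$ and then computes $P\vee(P'\wedge Q)=P+(P+Q')'=Q$, whereas you first check $QP'=P'Q$ and invoke Lemma~\ref{lem5}(ii) to get $Q\wedge P'=Q-P$ directly; both arrive at the same place. For the identification $(P_U+P_W)(M)=U+W$ the paper uses the chain $U+W\subseteq U\vee W=(P_U+P_W)(M)\subseteq U+W$, which is marginally quicker than your explicit computation that $P_U+P_W$ fixes $U+W$ pointwise, but the content is the same.
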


\begin{proof}
According to Lemma~\ref{lem1}, $\BPr(\mathbf M)$ is a bounded poset with an antitone involution. Now let $P,Q\in\Pro(\mathbf M)$. If $P\perp Q$ then $P\vee Q=P+Q$. If $P\leq Q$ then $P\perp Q'$, $P\vee Q'=P+Q'$, $P'\wedge Q=(P\vee Q')'$, $P\perp P'\wedge Q$ and
\[
P\vee(P'\wedge Q)=P+(P+Q')'=P+\mathbf I-(P+\mathbf I-Q)=Q.
\]
The second part of the theorem follows from Theorem~\ref{th4} and from
\[
U+W\subseteq U\vee W=(P_U+P_W)(M)\subseteq U+W
\]
for every $U,W\in L_s(\mathbf M)$ with $U\perp W$.
\end{proof}

\begin{example}
Consider the ring $(\mathbb Z_4,+,\cdot,0,1)$ of residue classes of the integers modulo $4$ and put $\mathbf M:=(\mathbb Z_4,+,\cdot,0)^2$ and $A:=\{0,2\}$. Then $\mathbf M$ has the following subspaces:
\begin{align*}
   U_1 & =\{0\}^2, \\
   U_2 & =\{0\}\times A, \\
   U_3 & =\{(0,0),(2,2)\}, \\
   U_4 & =A\times\{0\}, \\
   U_5 & =\{0\}\times\mathbb Z_4, \\
   U_6 & =\{(0,0),(0,2),(2,1),(2,3)\}, \\
   U_7 & =\{(0,0),(1,1),(2,2),(3,3)\}, \\
   U_8 & =A^2, \\
   U_9 & =\{(0,0),(1,3),(2,2),(3,1)\}, \\
U_{10} & =\{(0,0),(1,2),(2,0),(3,2)\}, \\
U_{11} & =\mathbb Z_4\times\{0\}, \\
U_{12} & =A\times\mathbb Z_4, \\
U_{13} & =\{(0,0),(0,2),(1,1),(1,3),(2,0),(2,2),(3,1),(3,3)\}, \\
U_{14} & =\mathbb Z_4\times A, \\
     M &
\end{align*}
The Hasse diagram of $(L(\mathbf M),\subseteq)$ is presented in Figure~3:

\vspace*{-2mm}

\[
\setlength{\unitlength}{7mm}
\begin{picture}(14,11)
\put(7,2){\circle*{.3}}
\put(3,4){\circle*{.3}}
\put(7,4){\circle*{.3}}
\put(11,4){\circle*{.3}}
\put(1,6){\circle*{.3}}
\put(3,6){\circle*{.3}}
\put(5,6){\circle*{.3}}
\put(7,6){\circle*{.3}}
\put(9,6){\circle*{.3}}
\put(11,6){\circle*{.3}}
\put(13,6){\circle*{.3}}
\put(3,8){\circle*{.3}}
\put(7,8){\circle*{.3}}
\put(11,8){\circle*{.3}}
\put(7,10){\circle*{.3}}
\put(7,2){\line(-2,1)4}
\put(7,2){\line(0,1)8}
\put(7,2){\line(2,1)4}
\put(3,4){\line(-1,1)2}
\put(3,4){\line(0,1)4}
\put(3,4){\line(2,1)8}
\put(7,4){\line(-1,1)2}
\put(7,4){\line(1,1)2}
\put(11,4){\line(-2,1)8}
\put(11,4){\line(0,1)4}
\put(11,4){\line(1,1)2}
\put(3,8){\line(-1,-1)2}
\put(7,8){\line(-1,-1)2}
\put(7,8){\line(1,-1)2}
\put(11,8){\line(1,-1)2}
\put(7,10){\line(-2,-1)4}
\put(7,10){\line(2,-1)4}
\put(6.675,1.25){$U_1$}
\put(2.05,3.75){$U_2$}
\put(7.4,3.75){$U_3$}
\put(11.4,3.75){$U_4$}
\put(.05,5.75){$U_5$}
\put(2.05,5.75){$U_6$}
\put(4.05,5.75){$U_7$}
\put(7.4,5.75){$U_8$}
\put(9.4,5.75){$U_9$}
\put(11.4,5.75){$U_{10}$}
\put(13.4,5.75){$U_{11}$}
\put(1.7,7.75){$U_{12}$}
\put(7.4,7.75){$U_{13}$}
\put(11.4,7.75){$U_{14}$}
\put(6.7,10.35){$M$}
\put(6.2,.3){{\rm Fig.~3}}
\end{picture}
\]
The unary operation $^\perp$ looks as follows:
\[
\begin{array}{c|ccccccccccccccccc}
   U    & U_1 &  U_2   &  U_3   &  U_4   &  U_5   &  U_6   & U_7 & U_8 & U_9 & U_{10} & U_{11} & U_{12} & U_{13} & U_{14} & M \\
\hline
U^\perp &  M  & U_{14} & U_{13} & U_{12} & U_{11} & U_{10} & U_9 & U_8 & U_7 &  U_6   &  U_5   &  U_4   &  U_3   &  U_2   & U_1
\end{array}
\]
Hence, $L_c(\mathbf M)=L(\mathbf M)$ and $L_s(\mathbf M)=\{U_1,U_5,U_6,U_{10},U_{11},M\}$. The Hasse diagram of $(L_s(\mathbf M),\subseteq)$ is depicted in Figure~4:

\vspace*{-6mm}

\[
\setlength{\unitlength}{7mm}
\begin{picture}(8,7)
\put(4,2){\circle*{.3}}
\put(1,4){\circle*{.3}}
\put(3,4){\circle*{.3}}
\put(5,4){\circle*{.3}}
\put(7,4){\circle*{.3}}
\put(4,6){\circle*{.3}}
\put(4,2){\line(-3,2)3}
\put(4,2){\line(-1,2)1}
\put(4,2){\line(1,2)1}
\put(4,2){\line(3,2)3}
\put(4,6){\line(-3,-2)3}
\put(4,6){\line(-1,-2)1}
\put(4,6){\line(1,-2)1}
\put(4,6){\line(3,-2)3}
\put(3.675,1.25){$U_1$}
\put(.05,3.85){$U_5$}
\put(2.05,3.85){$U_6$}
\put(5.4,3.85){$U_{10}$}
\put(7.4,3.85){$U_{11}$}
\put(3.7,6.35){$M$}
\put(3.2,.3){{\rm Fig.~4}}
\end{picture}
\]

\vspace*{-1mm}

One can easily see that $(L_s(\mathbf M),\subseteq,{}^\perp,\{(0,0)\},M)$ is the orthomodular lattice ${\rm MO}_2$ and hence an orthomodular poset.
\end{example}

In our examples, the poset of splitting subsemimodules or splitting submodules is a lattice. In general, this need not hold. G.~Birkhoff and J.~von~Neumann proved (\cite{BV}) that in the case of an infinite-dimensional Hilbert space over the field of complex numbers this poset is not a lattice but only an orthomodular poset.

Authors' addresses:

Ivan Chajda \\
Palack\'y University Olomouc \\
Faculty of Science \\
Department of Algebra and Geometry \\
17.\ listopadu 12 \\
771 46 Olomouc \\
Czech Republic \\
ivan.chajda@upol.cz

Helmut L\"anger \\
TU Wien \\
Faculty of Mathematics and Geoinformation \\
Institute of Discrete Mathematics and Geometry \\
Wiedner Hauptstra\ss e 8-10 \\
1040 Vienna \\
Austria, and \\
Palack\'y University Olomouc \\
Faculty of Science \\
Department of Algebra and Geometry \\
17.\ listopadu 12 \\
771 46 Olomouc \\
Czech Republic \\
helmut.laenger@tuwien.ac.at
\end{document}